\documentclass[11pt,twoside]{article}
\usepackage[T1]{fontenc}
\usepackage{textcomp}
%SetFonts
\usepackage{mathrsfs}
\usepackage{amsmath}
\usepackage{amssymb}
\usepackage{fancyhdr}
\usepackage{latexsym}
\usepackage{bbding}
\usepackage{mathrsfs}
\usepackage{wasysym}
\usepackage{cite}
\usepackage{multicol,graphics}
\usepackage{xcolor}

\RequirePackage{times}

\def\XXint#1#2#3{{\setbox0=\hbox{$#1{#2#3}{\int}$ }
		\vcenter{\hbox{$#2#3$ }}\kern-.6\wd0}}

\setcounter{MaxMatrixCols}{10}
\newtheorem{theorem}{Theorem}[section]
\newtheorem{lemma}[theorem]{Lemma}

\newtheorem{definition}[theorem]{Definition}

\newtheorem{proposition}[theorem]{Proposition}
\numberwithin{equation}{section}
\newenvironment{proof}[1][Proof]{\noindent\textbf{#1.} }{\hfill $\Box$}

\allowdisplaybreaks[4]
\makeatletter\setlength{\textwidth}{16cm}
\setlength{\oddsidemargin}{-0.010cm}
\setlength{\evensidemargin}{-0.010cm}
\setlength{\textheight}{23.25000cm}
\setlength{\topmargin}{-1.58cm}

\pagestyle{myheadings}\markboth{$~$
	\hfill {\rm Well-posedness of the Fractional Keller--Segel System in Variable Lebesgue Spaces}\hfill $~$} {$~$
	\hfill {\rm Gast\'on Vergara-Hermosilla, Jihong Zhao} \hfill$~$}

%%%%%%%%%%%%%%%%%%%%%%%%%%%%%%%%%%%%%%%%%%%%%%%%%%%%%%%%%%%
\begin{document}
	
\title{On the Cauchy problem for the fractional Keller--Segel system in variable Lebesgue spaces\footnote{E-mail: gaston.vergarahermosilla@univ-evry.fr, jihzhao@163.com.}}
\author{Gast\'on Vergara-Hermosilla$^{\text{1}}$, Jihong Zhao$^{\text{2}}$\\
[0.2cm] {\small $^{\text{1}}$LaMME, Univ. Evry, CNRS, Universit\'e Paris-Saclay, 91025, Evry, France}\\
[0.2cm] {\small $^{\text{2}}$School of Mathematics and Information Science, Baoji University of Arts and Sciences,}\\
[0.2cm] {\small  Baoji, Shaanxi 721013,  China}}

\date{\today}
\maketitle
\begin{abstract}
In this paper, we are mainly concerned with the well-posed problem of the fractional Keller--Segel system in the framework of  variable Lebesgue spaces. Based on carefully examining the algebraical structure of the system, we reduced the fractional Keller--Segel system into the generalized nonlinear heat equation to overcome the difficulties caused by the boundedness of the Riesz potential in a variable Lebesgue spaces, then by mixing some structural properties of the variable Lebesgue spaces with the optimal decay estimates of the fractional heat kernel, we were able to establish two well-posedness results of the fractional Keller--Segel system in this functional setting.
\end{abstract}
\smallbreak

\textit{Keywords}: Fractional Keller--Segel system; well-posedness; variable Lebesgue spaces.
\smallskip

\textit{2020 AMS Subject Classification}:   35A01,  35M11, 46E30, 92C17

%%%%%%%%%%%%%%%%%%%%%%%%%%%%%%%%%%%%%%%%%%%%%	
\section{Introduction}
In this paper we study the Cauchy problem of the fractional Keller--Segel system defined in the whole space $\mathbb{R}^n$:
\begin{equation}\label{eq1.1}
\begin{cases}
  \partial_{t}u+\Lambda^{\alpha}u+\nabla\cdot(u\nabla \phi)=0,\\
    -\Delta \phi=u,\\
  u(0,x)=u_0(x),\qquad x\in \mathbb{R}^n
\end{cases}
\end{equation}
where $n\geq2$, $u$ and $\phi$ are two unknown functions which stand for the cell density and the concentration of the chemical attractant, respectively, and the anomalous (normal) diffusion is modeled by a fractional power of the Laplacian with  $1<\alpha\leq2$. The positive operator $\Lambda^{\alpha}=(-\Delta)^{\frac{\alpha}{2}}$ is defined by
\begin{equation*}
  \Lambda^{\alpha}u(x):
    =\sigma_{n,\alpha}P.V.\int_{\mathbb{R}^{n}}\frac{u(x)-u(y)}{|x-y|^{n+\alpha}}dy
\end{equation*}
and $\sigma_{n,\alpha}$ is a normalization constant. A simple alternative representation
is given through the Fourier transform as
$\Lambda^{\alpha}u=\mathcal{F}^{-1}[|\xi|^{\alpha}\mathcal{F}u(\xi)]$,
where
$\mathcal{F}$ and $\mathcal{F}^{-1}$ are the Fourier transform and the inverse Fourier transform, respectively.
\smallbreak

 The system \eqref{eq1.1} was first proposed by Escudero in \cite{E06},  where it was used to describe the spatiotemporal distribution of a population density of random walkers undergoing L\'{e}vy flights.  Obviously, when $\alpha=2$, the system \eqref{eq1.1} becomes to the well-known elliptic-parabolic Keller--Segel system:
\begin{equation}\label{eq1.2}
\begin{cases}
  \partial_{t}u-\Delta u=-\nabla\cdot(u\nabla \phi),\\
  -\Delta \phi=u,\\
  u(0,x)=u_0(x),\qquad \mathbb{R}^n.
\end{cases}
\end{equation}
This system is  a mathematical model of chemotaxis, which was formulated  by Keller--Segel \cite{KS70} in 1970s, while it is also connected with astrophysical models of
gravitational self-interaction of massive particles in a cloud or a nebula (cf. \cite{BCGK04, BHN94B}). It is well-known that the system \eqref{eq1.2}  admits global solution if the initial total mass $\int_{\mathbb{R}^{n}}u_{0}(x)dx$ ($n\ge2$) is small enough, and it may blow up in finite time for large initial data, we refer the readers to see \cite{H03, H04} for a comprehensive review of these aspects of parabolic-elliptic system like \eqref{eq1.2}, and to \cite{I11, L13, BKZ15, LW23,  LYZ23, NY20, NY22,  XF22} for very recent well-posedness and ill-posedness results on the related system.
\smallbreak

For the fractional Keller--Segel system \eqref{eq1.1},  We know from \cite{E06, OY01} that the one dimensional system \eqref{eq1.1} possesses global solutions not
only in the case $\alpha=2$ but also in the case $1<\alpha<2$. However, a singularity appears in finite time when $0<\alpha<1$ if the
initial configuration of cells is sufficiently concentrated, see the paper \cite{BC10} due to Bournaveas--Calvez  for a extended discussion about it. Moreover, when $n\ge2$, the solutions of \eqref{eq1.1} globally exist for small initial data and may blow up in finite time for large initial data. In the following  we list some analytical results concerning about the global existence or blow-up for this system:
\begin{itemize}
\item For $1<\alpha<2$, Biler--Karch \cite{BK10}  established global well-posedness of system \eqref{eq1.1} with small initial data in the critical Lebesgue space $L^{\frac{n}{\alpha}}(\mathbb{R}^{n})$, they also proved the finite time blowup of nonnegative solutions with initial data imposed on large mass or high concentration conditions;
 \item For $1<\alpha<2$, Biler--Wu \cite{BW09} established  global well-posedness of system \eqref{eq1.1} with small initial data in  critical  Besov space $\dot{B}^{1-\alpha}_{2,q}(\mathbb{R}^{2})$;
\item For $1<\alpha\leq2$, Zhai \cite{Z10} proved global existence, uniqueness and stability of solutions of system \eqref{eq1.1} in critical Besov space with general potential type nonlinear term;
\item For $1<\alpha \leq 2$, Wu--Zheng \cite{WZ11} proved local well-posedness and global well-posedness of system \eqref{eq1.1} with small initial data in critical Fourier--Herz space $\mathcal{\dot{B}}^{2-2\alpha}_{q}(\mathbb{R}^{n})$ for $1<\alpha\leq2$ and $2\leq q\leq \infty$, they also proved  that system \eqref{eq1.1} is  ill-posedness in  $\mathcal{\dot{B}}^{-2}_{q}(\mathbb{R}^{n})$ and  $\dot{B}^{-2}_{\infty, q}(\mathbb{R}^{n})$ with $\alpha=2$ and $2<q\leq\infty$;
\item For $0<\alpha\leq 1$, Sugiyama--Yamamoto--Kato \cite{SYK15} proved local existence with large data and global existence with small data in critical Besov space $\dot{B}^{-\alpha+\frac{n}{p}}_{p,q}(\mathbb{R}^{n})$ for $n\geq 3$, $2\leq p<n$ and  $1\leq q<2$;
 \item For $1<\alpha\leq 2$, Zhao \cite{Z18} showed global existence and analyticity of solutions with small initial data in critical Besov space $\dot{B}^{-\alpha+\frac{n}{p}}_{p,q}(\mathbb{R}^{n})$ ($1\leq p<\infty$,  $1\leq q\leq \infty$) and $\dot{B}^{-\alpha}_{\infty,1}(\mathbb{R}^{n})$. For the limit case $\alpha=1$, the global existence and analyticity of solutions with small initial data in critical Besov space $\dot{B}^{-1+\frac{n}{p}}_{p,1}(\mathbb{R}^{n})$ ($1\leq p<\infty$) and $\dot{B}^{-1}_{\infty,1}(\mathbb{R}^{n})$ were also obtained;
 \item  Parts of these results were also extended for the other generalized chemotaxis models and the fractional power drift-diffusion system of bipolar type,
 we refer the readers to see \cite{CLW19, DL17, SYK15, YKS14, Z21} for more results.
 \end{itemize}
\smallbreak

Motivated by a series of papers \cite{CV24, V241, V242}, in this paper we study questions concerning about the well-posedness of system \eqref{eq1.1} in the framework of the Lebesgue spaces with variable exponent. The variable Lebesgue spaces are a generalization of the classical Lebesgue spaces, replacing the constant exponent $p\in[1,+\infty)$ with a variable exponent function $p(\cdot):\mathbb{R}^{n}\rightarrow[1,+\infty)$. As we will see in the following lines,  the resulting Banach spaces $L^{p(\cdot)}(\mathbb{R}^n)$ have many properties similar to the $L^{p}(\mathbb{R}^n)$ spaces, however, such spaces allow some functions with singularity at some point $x_{0}$, which is not in $L^{p}$ for any $1\leq p<\infty$ due to it either grows too quickly at $x_{0}$ or decays too slowly at infinity, thus there are surprising and subtle issues that make them very different of the classical ones.  To define the variable Lebesgue spaces $L^{p(\cdot)}(\Omega)$ with a measurable set $\Omega\subset\mathbb{R}^{n}$, we resort to a more subtle approach which is similar to that used to define the Luxemburg norm on the Orlicz spaces.
\smallbreak

Let $\mathcal{P}(\Omega)$ be the set of all Lebesgue measurable functions $p(\cdot):\Omega\rightarrow[1,+\infty]$.  We recall the following definition.

\begin{definition}\label{de1.1}
Given a  measurable set $\Omega\subset\mathbb{R}^{n}$ and $p(\cdot)\in \mathcal{P}(\Omega)$, for a measurable function $f$, we define
\begin{equation}\label{eq1.3}
   \|f\|_{L^{p(\cdot)}}:=\inf\left\{\lambda>0:  \rho_{p(\cdot)}\left(\frac{f}{\lambda}\right)\leq 1\right\},
\end{equation}
where the modular function $\rho_{p(\cdot)}$ associated with $p(\cdot)$ is given by
\begin{equation*}
   \rho_{p(\cdot)}(f):=\int_{\Omega}|f(x)|^{p(x)}dx.
\end{equation*}
Moreover, if the set on the right-hand side of  \eqref{eq1.3} is empty then we define $\|f\|_{L^{p(\cdot)}}=\infty$.
\end{definition}

For the classical Lebesgue space $L^{p}(\Omega)$ ($1\leq p<\infty$), its norm is directly defined  by
\begin{equation}\label{eq1.4}
\|f\|_{L^{p}}:=\left(\int_{\Omega}|f(x)|^{p}dx\right)^{\frac{1}{p}}.
\end{equation}
Note that if the exponent function $p(\cdot)$ is a constant, i.e. if $p(\cdot)=p\in [1,\infty)$, then we obtain the usual norm \eqref{eq1.3} via the modular function $ \rho_{p}$.
\smallbreak

\begin{definition}\label{de1.2}
Given a  measurable set $\Omega\subset\mathbb{R}^{n}$ and $p(\cdot)\in \mathcal{P}(\Omega)$, we define the variable exponent Lebesgue spaces $L^{p(\cdot)}(\Omega)$ to be the set of Lebesgue measurable functions $f$ such that   $\|f\|_{L^{p(\cdot)}}<+\infty$.
\end{definition}

 It is easy to verify that $L^{p(\cdot)}(\Omega)$ is a vector space, and the function $\|\cdot\|_{L^{p(\cdot)}}$ defines a norm on $L^{p(\cdot)}(\Omega)$, thus $L^{p(\cdot)}(\Omega)$ is a normed vector space. Actually, $L^{p(\cdot)}(\Omega)$ is a Banach space associated with the norm $\|\cdot\|_{L^{p(\cdot)}}$. For a complete  presentation of the theory of a  variable Lebesgue spaces, we refer to the readers to see books \cite{CF13, DHHR11}.
\smallbreak

Now we state the main results of this paper. The first result is the global existence of solutions of system \eqref{eq1.1} with small initial data in mixed variable Lebesgue space (see Section 2 for the definition of this space).

\begin{theorem}\label{th1.3}
Let $n\ge 2$, $1<\alpha\leq2$ such that $\frac{n}{2(\alpha-1)}>1$,  and let $p(\cdot)\in \mathcal{P}^{\log}(\mathbb{R}^{n})$ with $1<p^{-}\leq p(\cdot) \leq p^{+}<+\infty$. For any $\nabla(-\Delta)^{-1}u_{0}\in \mathcal{L}^{p(\cdot)}_{\frac{n}{\alpha-1}}(\mathbb{R}^{n})$, there exists a positive constant $\varepsilon$ such that if the initial data $u_{0}$ satisfies
\begin{equation}\label{eq1.5}
\|\nabla(-\Delta)^{-1}u_{0}\|_{\mathcal{L}^{p(\cdot)}_{\frac{n}{\alpha-1}}}\leq \varepsilon,
\end{equation}
then the system \eqref{eq1.1} admits a unique global solution $u$ such that
\begin{equation}\label{eq1.6}
\nabla(-\Delta)^{-1}u\in \mathcal{L}^{p(\cdot)}_{\frac{n}{\alpha-1}}\left(\mathbb{R}^{n},L^{\infty}(0,+\infty)\right).
\end{equation}
\end{theorem}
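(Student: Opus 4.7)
The strategy is to recast the system as a single nonlinear fractional heat equation for the potential gradient $v := \nabla(-\Delta)^{-1}u = \nabla\phi$, and to solve this equation by a Banach fixed point argument in the scaling critical space
\[
X := \mathcal{L}^{p(\cdot)}_{n/(\alpha-1)}\bigl(\mathbb{R}^n, L^\infty(0,+\infty)\bigr).
\]
Since $u = -\nabla\cdot v$, the vector identity $u\,\nabla\phi = -(\Delta\phi)\nabla\phi = -\nabla\cdot(v\otimes v) + \tfrac{1}{2}\nabla|v|^2$ combined with applying $\nabla(-\Delta)^{-1}$ to the first equation in \eqref{eq1.1} yields
\[
\partial_t v + \Lambda^\alpha v \;=\; \nabla(-\Delta)^{-1}\nabla\cdot\nabla\cdot(v\otimes v) + \tfrac{1}{2}\nabla|v|^2 \;=:\; B(v,v).
\]
The bilinear form $B$ involves only Calder\'on--Zygmund type operators (second-order Riesz transforms and a gradient) acting on the pointwise product $v\otimes v$; crucially, the full Riesz potential $(-\Delta)^{-1/2}$ has disappeared, which is exactly what allows us to work in $L^{p(\cdot)}$, since singular integrals of Calder\'on--Zygmund type are bounded on $L^{p(\cdot)}$ under the standing hypotheses $p(\cdot)\in\mathcal{P}^{\log}$ and $1<p^-\le p^+<+\infty$.

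By Duhamel's principle the reduced problem is equivalent to the fixed point equation $v = \mathcal{T}(v)$, with
\[
\mathcal{T}(v)(t) \;=\; e^{-t\Lambda^\alpha}v_0 \;+\; \int_0^t e^{-(t-s)\Lambda^\alpha}B(v,v)(s)\,ds, \qquad v_0 := \nabla(-\Delta)^{-1}u_0.
\]
I would seek a unique fixed point of $\mathcal{T}$ in a closed ball $B_X(0,2\varepsilon)$ via the Banach contraction principle. Two estimates are required: the linear bound $\|e^{-t\Lambda^\alpha}v_0\|_X \le C\,\|v_0\|_{\mathcal{L}^{p(\cdot)}_{n/(\alpha-1)}}$, to be obtained from the pointwise decay of the fractional heat kernel $K_\alpha(t,\cdot)$ together with the boundedness of convolution by $L^1$ kernels on $L^{p(\cdot)}$; and the bilinear bound
\[
\Bigl\|\int_0^\cdot e^{-(\cdot-s)\Lambda^\alpha}B(v,w)(s)\,ds\Bigr\|_X \;\le\; C\,\|v\|_X\,\|w\|_X,
\]
which is the crux of the whole argument.

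For the bilinear estimate, the plan is: first, fix $t>0$ and $x\in\mathbb{R}^n$ and bound the integrand pointwise by a convolution of $|\nabla K_\alpha(t-s,\cdot)|$ with $|v(s)\otimes w(s)|$, after invoking the $L^{p(\cdot)}$ boundedness of the Riesz-type operators hidden in $B$; second, apply the generalised H\"older inequality for variable Lebesgue spaces together with the kernel bound $\|\nabla K_\alpha(t,\cdot)\|_{L^r} \le C\,t^{-(n/\alpha)(1-1/r) - 1/\alpha}$; third, exchange the $s$-integral with the $L^{p(\cdot)}$ norm and then take the $L^\infty$ in time. The index $n/(\alpha-1)$ built into the mixed space is chosen exactly so that the resulting time integral $\int_0^t (t-s)^{-\beta}\,ds$ is scale invariant, and the hypothesis $n/[2(\alpha-1)] > 1$ is precisely what places the corresponding exponent $\beta$ in the convergent range.

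\textbf{Main obstacle.} I expect the bilinear estimate to be the technical bottleneck. Variable Lebesgue spaces do not admit the clean duality, interpolation and self-similar scaling that render the analogous argument almost mechanical in classical $L^{n/(\alpha-1)}$; moreover, a pointwise product $v\otimes w$ naturally lives in $L^{p(\cdot)/2}$ rather than $L^{p(\cdot)}$, so it cannot be reinserted into $X$ by a naive H\"older step. The mixed structure of $\mathcal{L}^{p(\cdot)}_{n/(\alpha-1)}$, which couples the variable integrability in space with a fixed scaling exponent $n/(\alpha-1)$, must therefore be exploited in a coordinated way, relying on the specific convolution and H\"older-type inequalities for this space established in Section~2. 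Once the bilinear estimate is secured, contraction on $B_X(0,2\varepsilon)$ for $\varepsilon$ sufficiently small produces a unique $v\in X$, and the representation $u = -\nabla\cdot v$ delivers the claimed solution of \eqref{eq1.1}.
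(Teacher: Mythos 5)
Your reduction of the system to the equation for $v=\nabla(-\Delta)^{-1}u$ via the identity $u\nabla(-\Delta)^{-1}u=-\nabla\cdot\bigl(v\otimes v-\tfrac{1}{2}|v|^{2}I\bigr)$ and your choice of fixed-point space coincide with the paper's, but the execution plan for both key estimates has genuine problems. For the linear part, you invoke ``boundedness of convolution by $L^{1}$ kernels on $L^{p(\cdot)}$''; this is exactly Young's inequality, which \emph{fails} in variable Lebesgue spaces because they are not translation invariant. The correct substitute, used in the paper, is the pointwise domination $|G^{\alpha}_{t}\ast v_{0}(x)|\leq\|G^{\alpha}_{t}\|_{L^{1}}\,\mathcal{M}(v_{0})(x)$ valid for radially decreasing kernels (Lemma \ref{le2.13}); this bound is uniform in $t$, so it survives the inner $L^{\infty}_{t}$ norm, and one concludes with the boundedness of $\mathcal{M}$ on $L^{p(\cdot)}$ and on $L^{\frac{n}{\alpha-1}}$ (Lemma \ref{le2.7}).

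The more serious gap is in the bilinear estimate. Your plan --- fix $s$, combine H\"older in space with the kernel bound $\|\nabla K_{\alpha}(t-s,\cdot)\|_{L^{r}}\lesssim (t-s)^{-\frac{n}{\alpha}(1-\frac{1}{r})-\frac{1}{\alpha}}$, then integrate in $s$ --- produces, at the critical scaling dictated by $v\otimes v\in L^{\frac{n}{2(\alpha-1)}}_{x}$ and target $L^{\frac{n}{\alpha-1}}_{x}$, the exponent $\beta=\frac{1}{\alpha}+\frac{n}{\alpha}\bigl(\frac{2(\alpha-1)}{n}-\frac{\alpha-1}{n}\bigr)=1$, so the time integral $\int_{0}^{t}(t-s)^{-1}ds$ diverges; the hypothesis $\frac{n}{2(\alpha-1)}>1$ does not place $\beta$ in the convergent range, contrary to what you assert, and in any case the H\"older-plus-kernel-norm step is again a Young-type inequality unavailable in $L^{p(\cdot)}$. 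The missing idea is to exploit the fact that $L^{\infty}_{t}$ sits \emph{inside} the spatial norm: bound $|v(s,y)|^{2}\leq\|v(\cdot,y)\|_{L^{\infty}_{t}}^{2}$ pointwise in $y$ and integrate the pointwise kernel estimate $|\nabla G^{\alpha}_{t-s}(x-y)|\leq C(|t-s|^{1/\alpha}+|x-y|)^{-(n+1)}$ in $s$ over $(0,+\infty)$ \emph{first}, which gives $C|x-y|^{-(n+1-\alpha)}$, i.e.\ the kernel of the Riesz potential $\mathcal{I}_{\alpha-1}$. The Duhamel term is then dominated by $\mathcal{R}\otimes\mathcal{R}\,\mathcal{I}_{\alpha-1}\bigl(\|v\|_{L^{\infty}_{t}}^{2}\bigr)$, and the mixed Hardy--Littlewood--Sobolev inequality (Lemma \ref{le2.11}) maps $\mathcal{L}^{p(\cdot)/2}_{\frac{n}{2(\alpha-1)}}$ back to $L^{p(\cdot)}$ (and $L^{\frac{n}{2(\alpha-1)}}$ to $L^{\frac{n}{\alpha-1}}$), which is exactly the step that reinserts the quadratic term into $\mathcal{X}$; the hypothesis $\frac{n}{2(\alpha-1)}>1$ is what makes this mixed space and Lemma \ref{le2.11} applicable. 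You correctly flagged this reinsertion as the obstacle, but the route you sketch does not overcome it.
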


The main reason we considered the mixed variable Lebesgue space $\mathcal{L}^{p(\cdot)}_{\frac{n}{\alpha-1}}(\mathbb{R}^{n})$ is essentially technical and it is motivated by the lack of flexibility of the indices that intervene in the boundedness of the Riesz transforms.
Moreover, we should emphasize the fact that, in Theorem \ref{th1.3}, we first measure the behavior of solutions for the time variable $t$ in the usual $L^{\infty}$ space, then we measure the behavior of solution for the space variable $x$ in the mixed variable Lebesgue space $\mathcal{L}^{p(\cdot)}_{\frac{n}{\alpha-1}}(\mathbb{R}^{n})$.
\smallbreak

In our second result, we change the order of the variables: we first measure the behavior of solutions for the space variable $x$ in the usual Lebesgue space $L^{p}(\mathbb{R}^{n})$, then we measure the behavior of solution for the time variable $t$ in the variable Lebesgue space $L^{q(\cdot)}(0,T)$.

\begin{theorem}\label{th1.4}
Let $n\ge 2$, $1<\alpha\leq2$, $q(\cdot)\in \mathcal{P}^{\log}(0,+\infty)$ with $2<q^{-}\leq q(\cdot)\leq q^{+}<+\infty$, and $p>\frac{n}{\alpha}$ satisfying the relationship $\frac{\alpha}{q(\cdot)}+\frac{n}{p}<1$, and $\bar{p}(\cdot)\in \mathcal{P}^{\text{emb}}_{p}(\mathbb{R}^{n})$. Then for any $\nabla(-\Delta)^{-1}u_{0}\in L^{\bar{p}(\cdot)}(\mathbb{R}^{n})$, there exists a time $T>0$ such that the system \eqref{eq1.1} admits a unique local solution $u$ satisfying
\begin{equation}\label{eq1.7}
\nabla(-\Delta)^{-1}u\in L^{q(\cdot)}\left(0,T; L^{p}(\mathbb{R}^{n})\right).
\end{equation}
\end{theorem}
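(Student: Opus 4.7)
The plan is to reduce \eqref{eq1.1} to a semilinear fractional heat equation for the vector field $v:=\nabla(-\Delta)^{-1}u$ and then solve that equation by a Banach contraction argument in the space $X_{T}:=L^{q(\cdot)}(0,T;L^{p}(\mathbb{R}^{n}))$. Since $-\Delta\phi=u$ we have $v=\nabla\phi$ and $u=-\mathrm{div}\,v$, so applying $\nabla(-\Delta)^{-1}$ to the first equation of \eqref{eq1.1} yields
\begin{equation*}
\partial_{t}v+\Lambda^{\alpha}v=-\nabla(-\Delta)^{-1}\nabla\cdot\bigl(\mathrm{div}\,v\cdot v\bigr),\qquad v(0)=\nabla(-\Delta)^{-1}u_{0}=:v_{0},
\end{equation*}
whose Duhamel form is $v=e^{-t\Lambda^{\alpha}}v_{0}+B(v,v)$ with
\begin{equation*}
B(v,w)(t):=-\int_{0}^{t}e^{-(t-s)\Lambda^{\alpha}}\nabla(-\Delta)^{-1}\nabla\cdot\bigl(\mathrm{div}\,v(s)\cdot w(s)\bigr)\,ds.
\end{equation*}
The reason for working with a \emph{constant} spatial exponent $p$ is that the Riesz-type operator $\nabla(-\Delta)^{-1}\nabla\cdot$ is bounded on $L^{p}(\mathbb{R}^{n})$ for every $1<p<\infty$, a property that is delicate in the variable-exponent setting.

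For the linear part, the hypothesis $\bar p(\cdot)\in\mathcal{P}^{\text{emb}}_{p}(\mathbb{R}^{n})$ provides the embedding $L^{\bar p(\cdot)}(\mathbb{R}^{n})\hookrightarrow L^{p}(\mathbb{R}^{n})$, so $v_{0}\in L^{p}$. Since $e^{-t\Lambda^{\alpha}}$ is a contraction on $L^{p}$, I obtain
\begin{equation*}
\bigl\|e^{-\cdot\,\Lambda^{\alpha}}v_{0}\bigr\|_{X_{T}}\leq \|v_{0}\|_{L^{p}}\bigl\|\mathbf{1}_{(0,T)}\bigr\|_{L^{q(\cdot)}(0,T)},
\end{equation*}
and the rightmost factor vanishes as $T\downarrow 0$ because $q^{+}<\infty$.

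The core of the proof is the bilinear estimate for $B$. Using the identity $\mathrm{div}\,v\cdot v=\nabla\cdot(v\otimes v)-(v\cdot\nabla)v$ together with the fact that $v$ is curl-free (so $(v\cdot\nabla)v=\tfrac{1}{2}\nabla|v|^{2}$), I move all the spatial derivatives onto the kernel of the fractional heat semigroup. Combining the sharp decay
\begin{equation*}
\bigl\|\nabla^{k}e^{-\tau\Lambda^{\alpha}}f\bigr\|_{L^{p}}\leq C\,\tau^{-\frac{k}{\alpha}-\frac{n}{\alpha}(\frac{1}{r}-\frac{1}{p})}\|f\|_{L^{r}},\qquad 1<r\leq p<\infty,
\end{equation*}
with the $L^{p}$-boundedness of the Riesz transforms and H\"older's inequality $\|v\otimes w\|_{L^{p/2}}\leq\|v\|_{L^{p}}\|w\|_{L^{p}}$, I expect a pointwise-in-time bound of the form
\begin{equation*}
\|B(v,w)(t)\|_{L^{p}}\leq C\int_{0}^{t}(t-s)^{-\gamma}\|v(s)\|_{L^{p}}\|w(s)\|_{L^{p}}\,ds
\end{equation*}
for some $\gamma\in(0,1)$ determined by $\alpha$, $n$, $p$ (the hypothesis $p>n/\alpha$ keeps $\gamma$ below $1$). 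Taking the $L^{q(\cdot)}$-norm in time and invoking the variable-exponent Hardy--Littlewood--Sobolev / Young convolution inequality on $(0,T)$---valid when $q(\cdot)\in\mathcal{P}^{\log}$ with $2<q^{-}\leq q^{+}<\infty$---together with the strict subcriticality $\alpha/q(\cdot)+n/p<1$, I get
\begin{equation*}
\|B(v,w)\|_{X_{T}}\leq C\,T^{\delta}\,\|v\|_{X_{T}}\|w\|_{X_{T}},\qquad \delta>0.
\end{equation*}
The lower bound $q^{-}>2$ is what lets the product $\|v\|_{L^{p}}\|w\|_{L^{p}}$ be measured in $L^{q(\cdot)/2}$ via the variable-exponent H\"older inequality.

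Combining the two estimates, for $T$ sufficiently small the map $\Phi(v):=e^{-t\Lambda^{\alpha}}v_{0}+B(v,v)$ sends a small closed ball of $X_{T}$ into itself and is a strict contraction there, so Banach's fixed-point theorem produces a unique local solution $v\in X_{T}$, and $u:=-\mathrm{div}\,v$ recovers the solution of \eqref{eq1.1}. The main obstacle will be the bilinear step: although the spatial analysis reduces to classical $L^{p}$ theory thanks to the constant exponent in $x$, the time convolution against the singular kernel $(t-s)^{-\gamma}$ must be controlled in the \emph{variable} Lebesgue norm $L^{q(\cdot)}$, and it is precisely here that the log-H\"older hypothesis on $q(\cdot)$ and the subcriticality condition $\alpha/q(\cdot)+n/p<1$ will enter in an essential way---both to justify the variable-exponent HLS/Young step and to extract the positive power of $T$ needed to close the contraction.
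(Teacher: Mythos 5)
Your route is the same as the paper's: the identical reduction to the curl-free field $v=\nabla(-\Delta)^{-1}u$ with the symmetric nonlinearity $\mathcal{R}\otimes\mathcal{R}\,\nabla\cdot\bigl(v\otimes v-\tfrac{1}{2}|v|^{2}I\bigr)$, the same solution space $L^{q(\cdot)}(0,T;L^{p}(\mathbb{R}^{n}))$, the same linear estimate (embedding $L^{\bar p(\cdot)}\hookrightarrow L^{p}$ from Lemma \ref{le2.5}, contraction of the semigroup on $L^{p}$, and $\|\chi_{(0,T)}\|_{L^{q(\cdot)}}\leq 2\max\{T^{1/q^{-}},T^{1/q^{+}}\}$), and the same pointwise-in-time bilinear bound with kernel $(t-s)^{-\gamma}$, $\gamma=\tfrac{1}{\alpha}+\tfrac{n}{\alpha p}$, exactly as in \eqref{eq4.7}.

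The gap sits precisely in the step you left as ``expected''. First, $p>n/\alpha$ does \emph{not} give $\gamma<1$: one has $\gamma<1$ if and only if $\tfrac{n}{p}<\alpha-1$, i.e.\ $p>\tfrac{n}{\alpha-1}$, and for $\alpha<2$ the stated hypotheses allow $\gamma>1$ (e.g.\ $\alpha=1.2$, $n=2$, $p=5$, $q^{-}>2$ satisfies $\tfrac{\alpha}{q}+\tfrac{n}{p}<1$ while $\gamma=\tfrac{7}{6}$), in which case the time integral is not even absolutely convergent in this form; the paper's own proof of Lemma \ref{le4.2} in fact uses the stronger condition $\tfrac{\alpha}{q(\cdot)}+\tfrac{n}{p}<\alpha-1$, which agrees with the stated $<1$ only when $\alpha=2$, so you have inherited (rather than created) this bookkeeping issue, but your parenthetical justification is wrong as written. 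Second, there is no ``variable-exponent Young inequality'': the paper explicitly recalls that Young's convolution inequality fails in $L^{q(\cdot)}$, and instead treats the time convolution by duality (Lemma \ref{le2.2}), recognizing the inner integral as a one-dimensional Riesz potential $\mathcal{I}_{\beta}(|\psi|)$ with $\beta=1-\tfrac{1}{\alpha}-\tfrac{n}{\alpha p}$ (again requiring $\beta>0$, i.e.\ $\gamma<1$), applying the variable HLS inequality of Lemma \ref{le2.9} and then the bounded-interval embedding $L^{q'(\cdot)}(0,T)\hookrightarrow L^{r(\cdot)}(0,T)$ of Lemma \ref{le2.3}; this is where $q^{-}>2$ and the subcriticality condition actually enter, and it is the part your sketch would have to reproduce. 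Note finally that this argument yields a bilinear constant $C(1+T)$ rather than $CT^{\delta}$; the contraction closes anyway because the smallness comes from the factor $\max\{T^{1/q^{-}},T^{1/q^{+}}\}$ in the linear term, which you do have, so that last discrepancy is harmless.
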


The rest of this paper is organized as follows. In Section 2, we present the detailed review of the properties of the variable exponent Lebesgue spaces and list all kinds of analytic estimates in terms of the bounded properties of singular integral operators in the variable Lebesgue spaces, moreover, we recall some decay estimates of the fractional heat kernels involved in the proof of our main results. In Sections 3 and 4,  we establish the linear and nonlinear estimates of solutions in each solution spaces, then complete the proofs of Theorem \ref{th1.3} and  Theorem \ref{th1.4} by the Banach fixed point theorem.

\section{ Preliminaries}

In this section we introduce some conventions and notations, and state some basic results. For the latter we refer to the books \cite{CF13,DHHR11}.

We use the notations $C$ as a generic harmless constant, i.e. a constant whose value may change from appearance to appearance. By $\chi_{\Omega}$ we denote the characteristic function of $\Omega\subset\mathbb{R}^n$. The notation $\mathcal{X}\hookrightarrow \mathcal{Y}$ denotes continuous embedding from $\mathcal{X}$ to $\mathcal{Y}$.  For any $p(\cdot)\in \mathcal{P}(\Omega)$, we denote
$$
 p^{-}:=\operatorname{essinf}_{x\in\Omega}p(x), \ \  p^{+}:=\operatorname{esssup}_{x\in\Omega}p(x).
$$
Throughout this paper, we will always assume that $1<p^{-}\leq p^{+}<+\infty$.
\smallbreak

\subsection{The variable Lebesgue spaces}

We collect some properties of the variable exponent Lebesgue spaces.  The first one is a generalization of the H\"{o}lder's inequality (cf. \cite{CF13}, Corollary 2.28; \cite{DHHR11}, Lemma 3.2.20).

\begin{lemma}\label{le2.1}
Given two exponent functions $p_{1}(\cdot), p_{2}(\cdot)\in \mathcal{P}(\Omega)$, define $p(\cdot)\in \mathcal{P}(\Omega)$ by $\frac{1}{p(x)}=\frac{1}{p_{1}(x)}+\frac{1}{p_{2}(x)}$. Then there exists a constant $C$ such that for all $f\in L^{p_{1}(\cdot)}(\Omega)$ and $g\in L^{p_{2}(\cdot)}(\Omega)$, we have $fg\in L^{p(\cdot)}(\Omega)$ and
\begin{equation}\label{eq2.1}
   \|fg\|_{L^{p(\cdot)}}\leq C \|f\|_{L^{p_{1}(\cdot)}}\|g\|_{L^{p_{2}(\cdot)}}.
\end{equation}
\end{lemma}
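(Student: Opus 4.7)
The plan is to reduce the statement to the case of functions with unit norm, apply a pointwise Young inequality at the level of the modular, and then convert the resulting modular bound back into a norm bound by a scaling argument. The mild twist compared with the classical H\"older inequality is that the modular $\rho_{p(\cdot)}$ is not positively homogeneous, so one has to work with the Luxemburg functional rather than raising integrals to the power $1/p$.

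First I would assume $f$ and $g$ are non-trivial and normalize by setting $\tilde{f}=f/\|f\|_{L^{p_{1}(\cdot)}}$ and $\tilde{g}=g/\|g\|_{L^{p_{2}(\cdot)}}$; by the very definition of the Luxemburg norm \eqref{eq1.3}, this forces $\rho_{p_{1}(\cdot)}(\tilde{f})\leq 1$ and $\rho_{p_{2}(\cdot)}(\tilde{g})\leq 1$. The key step is then the pointwise Young inequality with the exponents $p_{1}(x)/p(x)$ and $p_{2}(x)/p(x)$, whose reciprocals sum to $1$ thanks to the definition of $p(\cdot)$. Applying it to $a=|\tilde{f}(x)|^{p(x)}$ and $b=|\tilde{g}(x)|^{p(x)}$ yields
\[
|\tilde{f}(x)\tilde{g}(x)|^{p(x)}
\leq \frac{p(x)}{p_{1}(x)}|\tilde{f}(x)|^{p_{1}(x)}+\frac{p(x)}{p_{2}(x)}|\tilde{g}(x)|^{p_{2}(x)}.
\]
Integrating over $\Omega$ and using $p(x)\leq p_{i}(x)$ produces
\[
\rho_{p(\cdot)}(\tilde{f}\tilde{g})
\leq \rho_{p_{1}(\cdot)}(\tilde{f})+\rho_{p_{2}(\cdot)}(\tilde{g})\leq 2.
\]

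Finally, to pass from this modular bound to a bound on $\|\tilde{f}\tilde{g}\|_{L^{p(\cdot)}}$, I would use the elementary scaling inequality $\rho_{p(\cdot)}(h/\lambda)\leq \lambda^{-p^{-}}\rho_{p(\cdot)}(h)$ valid for $\lambda\geq 1$, which follows from $p(x)\geq p^{-}$. Choosing $\lambda=2^{1/p^{-}}$ produces $\rho_{p(\cdot)}(\tilde{f}\tilde{g}/\lambda)\leq 1$, hence $\|\tilde{f}\tilde{g}\|_{L^{p(\cdot)}}\leq 2^{1/p^{-}}$ by Definition \ref{de1.1}. Undoing the normalization gives the desired inequality with $C=2^{1/p^{-}}$, a constant depending only on $p^{-}$.

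The main obstacle, which is purely bookkeeping, is making sure the exponents $p_{1}(x)/p(x)$ and $p_{2}(x)/p(x)$ are actually well-defined and at least $1$ almost everywhere so that Young's inequality applies pointwise; this is exactly the content of the assumption $1<p^{-}\leq p^{+}<+\infty$ together with the additive relation defining $p(\cdot)$. Once that is checked, the argument is essentially the variable-exponent translation of the classical proof of H\"older's inequality via Young, with the sole modification that the final step is carried out through the Luxemburg infimum rather than by taking $1/p$-th roots.
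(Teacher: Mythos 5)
The paper does not prove Lemma \ref{le2.1} at all: it is quoted as a known result with a pointer to Corollary 2.28 of \cite{CF13} and Lemma 3.2.20 of \cite{DHHR11}. Your argument is a correct, self-contained proof of that cited result, and it is in fact the standard textbook route: normalize, apply Young's inequality pointwise with the conjugate pair $p_{1}(x)/p(x)$, $p_{2}(x)/p(x)$ at the level of the modular, and convert the modular bound back to a Luxemburg-norm bound by scaling. All the individual steps check out: the exponents $p_{i}(x)/p(x)$ are genuinely $>1$ a.e.\ because $p_{i}^{+}<\infty$ forces $p(x)<p_{i}(x)$ strictly; the coefficients $p(x)/p_{i}(x)\leq 1$ justify dropping them after integration; and the scaling inequality $\rho_{p(\cdot)}(h/\lambda)\leq\lambda^{-p^{-}}\rho_{p(\cdot)}(h)$ for $\lambda\geq 1$ correctly yields $C=2^{1/p^{-}}\leq 2$.

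One step deserves an explicit word that you currently pass over: the assertion that $\rho_{p_{1}(\cdot)}(\tilde f)\leq 1$ when $\tilde f=f/\|f\|_{L^{p_{1}(\cdot)}}$ is not literally the definition \eqref{eq1.3}, since the Luxemburg norm is an infimum and need not be attained in general. It does hold here because $p_{1}^{+}<\infty$ makes $\lambda\mapsto\rho_{p_{1}(\cdot)}(f/\lambda)$ continuous (e.g.\ by dominated or monotone convergence along $\lambda_{n}\downarrow\|f\|_{L^{p_{1}(\cdot)}}$), so the bound $\leq 1$ passes to the limit; this is exactly where the standing assumption $p^{+}<+\infty$ is used a second time, beyond guaranteeing that Young's inequality does not degenerate. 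With that one sentence added, the proof is complete.
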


Note that in the classical Lebesgue $L^{p}(\Omega)$ ($1\leq p<+\infty$), the norm can be represented by using the norm conjugate formula
 \begin{equation}\label{eq2.2}
   \|f\|_{L^{p}}\leq \sup_{\|g\|_{L^{p'}}\leq1}\int_{\Omega}\left|f(x)g(x)\right|dx,
\end{equation}
where $1<p'\leq+\infty$ is a conjugate of $p$, i.e. $\frac{1}{p}+\frac{1}{p'}=1$.   A slightly weaker analog of the equality \eqref{eq2.2} is true for the variable Lebesgue spaces. Indeed, given $p(\cdot)\in \mathcal{P}(\Omega)$, we  define the conjugate exponent function of $p(\cdot)$ by the formula
$$
  \frac{1}{p(x)}+\frac{1}{p'(x)}=1, \ \ x\in\Omega.
$$
Then the norm $\|\cdot\|_{L^{p(\cdot)}}$ satisfies the following norm conjugate formula (cf. \cite{DHHR11}, Corollary 3.2.14).
\begin{lemma}\label{le2.2}
Let $p(\cdot)\in \mathcal{P}(\Omega)$, and let $p'(\cdot)$ be the conjugate of $p(\cdot)$. Then we have
\begin{equation}\label{eq2.3}
   \frac{1}{2}\|f\|_{L^{p(\cdot)}}\leq \sup_{\|g\|_{L^{p'(\cdot)}}\leq1}\int_{\Omega}\left|f(x)g(x)\right|dx\leq 2\|f\|_{L^{p(\cdot)}}.
\end{equation}
\end{lemma}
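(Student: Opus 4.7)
The plan is to establish the two inequalities in \eqref{eq2.3} separately: the upper bound by a Young/Hölder argument, and the lower bound by constructing an explicit near-optimal test function built from $f$ itself.

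For the upper estimate, I would first reduce by homogeneity of the norm to the normalized situation $\|f\|_{L^{p(\cdot)}} = \|g\|_{L^{p'(\cdot)}} = 1$. Under the standing assumption $p^{+} < +\infty$, the definition of the Luxemburg norm implies $\rho_{p(\cdot)}(f) \le 1$ and $\rho_{p'(\cdot)}(g) \le 1$. Applying the classical pointwise Young inequality
\[
|f(x) g(x)| \le \frac{|f(x)|^{p(x)}}{p(x)} + \frac{|g(x)|^{p'(x)}}{p'(x)},
\]
and integrating over $\Omega$ yields $\int_{\Omega} |fg|\, dx \le \rho_{p(\cdot)}(f) + \rho_{p'(\cdot)}(g) \le 2$. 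Undoing the normalization produces the right-hand inequality of \eqref{eq2.3}. As an alternative, one could simply invoke Lemma \ref{le2.1} with $p_{1}(\cdot)=p(\cdot)$ and $p_{2}(\cdot)=p'(\cdot)$ and verify that the constant $C$ there can be tracked to be at most $2$.

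For the lower estimate, I would assume $0 < \|f\|_{L^{p(\cdot)}} < +\infty$ (the boundary cases being trivial) and set $\lambda := \|f\|_{L^{p(\cdot)}}$. The natural candidate test function is
\[
g(x) := \operatorname{sgn}(f(x))\,\left|\frac{f(x)}{\lambda}\right|^{p(x)-1}.
\]
Using the pointwise identity $(p(x)-1)\,p'(x)=p(x)$, one obtains $\rho_{p'(\cdot)}(g) = \rho_{p(\cdot)}(f/\lambda)$, and the assumption $p^{+}<+\infty$ ensures (by the definition of the Luxemburg norm) that $\rho_{p(\cdot)}(f/\lambda)\le 1$, hence $\|g\|_{L^{p'(\cdot)}} \le 1$. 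A direct computation then gives
\[
\int_{\Omega} |f(x)\,g(x)|\,dx = \lambda\, \rho_{p(\cdot)}(f/\lambda),
\]
and the further property that $\rho_{p(\cdot)}(f/\lambda)=1$ when $\lambda=\|f\|_{L^{p(\cdot)}}$ and $p^{+}<+\infty$ would deliver $\int_\Omega |fg|\,dx = \|f\|_{L^{p(\cdot)}}$, which is even stronger than the claimed $\tfrac12\|f\|_{L^{p(\cdot)}}$.

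The main obstacle is precisely this last identity $\rho_{p(\cdot)}(f/\|f\|_{L^{p(\cdot)}})=1$: in general it only holds as $\ge$ rather than as equality, because the infimum in the definition of the norm is not always attained. The clean way to bypass this is to fix $\varepsilon\in(0,\lambda)$, to replace $\lambda$ by $\lambda-\varepsilon$ in the test function $g$ (so that $\rho_{p(\cdot)}\!\bigl(f/(\lambda-\varepsilon)\bigr) > 1$ and thus the definition of the norm is engaged in the opposite direction), to then renormalize $g$ by its $L^{p'(\cdot)}$-norm, and finally to let $\varepsilon\downarrow 0$. The loss incurred in this limiting procedure, together with the possibility that the ratio between the modular and the norm is only controlled up to a factor of $2$, is exactly what produces the constant $1/2$ on the left of \eqref{eq2.3} and makes the statement valid without any additional regularity on $p(\cdot)$ beyond $1<p^{-}\le p^{+}<+\infty$.
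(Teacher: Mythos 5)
The paper itself offers no proof of this lemma: it is quoted directly from the literature (Corollary 3.2.14 in the book of Diening--Harjulehto--H\"ast\"o--Ruzicka), so the comparison is with that standard argument, and your proposal is essentially that standard duality proof. Under the paper's standing assumption $1<p^{-}\leq p^{+}<+\infty$ it is correct: pointwise Young's inequality gives the right-hand bound with constant $2$, and the test function $g=\operatorname{sgn}(f)\,|f/\lambda|^{p(\cdot)-1}$ gives the left-hand bound. Two corrections of detail. First, your ``main obstacle'' is stated backwards: for $0<\|f\|_{L^{p(\cdot)}}<\infty$ one always has $\rho_{p(\cdot)}\bigl(f/\|f\|_{L^{p(\cdot)}}\bigr)\leq 1$ (monotone convergence along $\lambda\downarrow\|f\|_{L^{p(\cdot)}}$), never ``$\geq$ in general''; and when $p^{+}<\infty$ one in fact has equality, since $\rho_{p(\cdot)}(f/\lambda)=c<1$ would give $\rho_{p(\cdot)}(f/\mu)\leq(\lambda/\mu)^{p^{+}}c\leq1$ for $\mu$ slightly below $\lambda$, contradicting the definition of the infimum. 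So in the paper's setting your direct construction already yields $\sup_{\|g\|_{L^{p'(\cdot)}}\leq1}\int|fg|\,dx\geq\|f\|_{L^{p(\cdot)}}$, i.e.\ constant $1$ rather than $\tfrac12$, and no limiting procedure is needed. Second, if you do run the $\varepsilon$-fallback, the renormalization step should be justified by the precise fact that $\|h\|_{L^{p'(\cdot)}}\leq\rho_{p'(\cdot)}(h)$ whenever $\rho_{p'(\cdot)}(h)\geq1$, together with the finiteness of $m:=\rho_{p(\cdot)}\bigl(f/(\lambda-\varepsilon)\bigr)$ (which again uses $p^{+}<\infty$); this gives $\int|f|\,|g_\varepsilon|/\|g_\varepsilon\|_{L^{p'(\cdot)}}\,dx\geq(\lambda-\varepsilon)m/m=\lambda-\varepsilon$, whereas your appeal to ``a factor of $2$ between modular and norm'' is not the correct statement. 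Finally, be aware that the constants $\tfrac12$ and $2$ in the lemma as cited are there to cover general exponents in $\mathcal{P}(\Omega)$, including sets where $p(x)=1$ or $p(x)=\infty$, where the modular acquires an essential-supremum component and both your identity $(p(x)-1)p'(x)=p(x)$ and the pointwise Young inequality must be modified; your argument covers the range $1<p^{-}\leq p^{+}<\infty$ actually used in the paper, which is sufficient for its purposes.
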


We know that every function in the  variable Lebesgue space is locally integrable, and it holds the following embedding result with a sharper embedding constant (cf. \cite{CF13}, Corollary 2.48).
\begin{lemma}\label{le2.3}
Given  two exponent functions $p_{1}(\cdot), p_{2}(\cdot)\in \mathcal{P}(\Omega)$ with $1<p_{1}^{+}, p_{2}^{+}<+\infty$. Then  $L^{p_{2}(\cdot)}(\Omega)\hookrightarrow L^{p_{1}(\cdot)}(\Omega)$ if and only if $p_{1}(x)\leq p_{2}(x)$ almost everywhere. Furthermore,  in this case we have
\begin{equation}\label{eq2.4}
   \|f\|_{L^{p_{1}(\cdot)}}\leq \left(1+|\Omega|\right)\|f\|_{L^{p_{2}(\cdot)}}.
\end{equation}
\end{lemma}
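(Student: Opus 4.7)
The plan is to prove the two implications separately, with the quantitative bound $\|f\|_{L^{p_{1}(\cdot)}}\leq (1+|\Omega|)\|f\|_{L^{p_{2}(\cdot)}}$ falling out of the sufficiency argument.

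For the sufficiency direction (assuming $p_{1}(x)\le p_{2}(x)$ a.e.), I would first reduce to the case $\|f\|_{L^{p_{2}(\cdot)}}=1$ by homogeneity, which via Definition \ref{de1.1} gives $\rho_{p_{2}(\cdot)}(f)\le 1$. Next, I would split the domain as $\Omega=E_{+}\cup E_{-}$ with $E_{+}:=\{x\in\Omega:|f(x)|>1\}$ and $E_{-}:=\{x\in\Omega:|f(x)|\le 1\}$. On $E_{+}$, the pointwise monotonicity $t\mapsto t^{p(x)}$ for $t>1$ together with $p_{1}(x)\le p_{2}(x)$ yields $|f(x)|^{p_{1}(x)}\le |f(x)|^{p_{2}(x)}$. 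On $E_{-}$ we simply have $|f(x)|^{p_{1}(x)}\le 1$. Integrating these two bounds gives
\begin{equation*}
\rho_{p_{1}(\cdot)}(f)\le |E_{-}|+\rho_{p_{2}(\cdot)}(f)\le |\Omega|+1.
\end{equation*}
Finally, since $p_{1}(x)\ge 1$, for any $\lambda\ge 1$ one has the scaling inequality $\rho_{p_{1}(\cdot)}(f/\lambda)\le \lambda^{-1}\rho_{p_{1}(\cdot)}(f)$. Choosing $\lambda:=1+|\Omega|$ therefore gives $\rho_{p_{1}(\cdot)}(f/\lambda)\le 1$, and the definition of the Luxemburg norm yields $\|f\|_{L^{p_{1}(\cdot)}}\le 1+|\Omega|$, i.e. \eqref{eq2.4}.

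For the necessity direction, I would argue by contrapositive. Assume the set $A:=\{x\in\Omega:p_{1}(x)>p_{2}(x)\}$ has positive measure. Writing $A=\bigcup_{k\in\mathbb{N}}A_{k}$ with $A_{k}:=\{p_{1}\ge p_{2}+1/k\}\cap B(0,k)$, there exist $\varepsilon>0$ and a measurable set $B\subset A$ with $0<|B|<\infty$ and $p_{1}(x)\ge p_{2}(x)+\varepsilon$ on $B$. I would then test the embedding against the family $f_{t}:=t\,\chi_{B}$ for $t>1$: computing the Luxemburg norms via the modulars $\int_{B}(t/\lambda)^{p_{i}(x)}\,dx$ and using $1<p_{1}^{-}\le p_{1}^{+}<+\infty$ (so that the modular equation is solvable and both norms comparable to powers of $t$), one checks that the ratio $\|f_{t}\|_{L^{p_{1}(\cdot)}}/\|f_{t}\|_{L^{p_{2}(\cdot)}}$ grows at least like a positive power of $t$, hence is unbounded as $t\to +\infty$, contradicting the existence of any embedding constant.

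The main obstacle is the necessity direction, which requires a careful calibration of the Luxemburg norms of the step functions $t\,\chi_{B}$ in terms of $t$, $|B|$, and the extremal values of $p_{1},p_{2}$ on $B$; the sufficiency direction is essentially the splitting argument above and produces the sharp-looking constant $1+|\Omega|$ almost for free. The hypothesis $1<p_{i}^{-}\le p_{i}^{+}<+\infty$ is used crucially in the necessity step to ensure that both modular equations define genuine norms and that the growth comparison does not degenerate.
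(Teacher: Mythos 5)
The paper offers no proof of this lemma at all: it is quoted directly from Cruz-Uribe--Fiorenza \cite{CF13} (Theorem 2.45 and Corollary 2.48), so your argument has to be judged on its own terms. Your sufficiency half is correct and is exactly the standard textbook argument: normalize $\|f\|_{L^{p_{2}(\cdot)}}=1$ so that $\rho_{p_{2}(\cdot)}(f)\le 1$, split $\Omega$ into $\{|f|\le 1\}$ and $\{|f|>1\}$ to get $\rho_{p_{1}(\cdot)}(f)\le |\Omega|+1$, and then use $\rho_{p_{1}(\cdot)}(f/\lambda)\le \lambda^{-1}\rho_{p_{1}(\cdot)}(f)$ for $\lambda\ge 1$ to obtain the constant $1+|\Omega|$ (this, like the lemma itself, tacitly requires $|\Omega|<\infty$, which the paper's statement leaves implicit).

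The necessity half, however, contains a genuine gap. You test the putative embedding with the family $f_{t}=t\,\chi_{B}$ and claim that $\|f_{t}\|_{L^{p_{1}(\cdot)}}/\|f_{t}\|_{L^{p_{2}(\cdot)}}$ grows like a positive power of $t$. It cannot: the Luxemburg norm is positively homogeneous, so $\|t\chi_{B}\|_{L^{p_{i}(\cdot)}}=t\,\|\chi_{B}\|_{L^{p_{i}(\cdot)}}$ and the ratio is \emph{independent} of $t$. Concretely, the modular equation $\int_{B}(t/\lambda)^{p_{i}(x)}dx=1$ is solved by $\lambda_{i}=t/u_{i}$, where $u_{i}$ solves $\int_{B}u_{i}^{p_{i}(x)}dx=1$ and does not depend on $t$; scaling one fixed function can never defeat a bounded inclusion. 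The correct test goes in the opposite direction: after refining $B$ so that $p_{1}(x)\ge q_{1}>q_{2}\ge p_{2}(x)$ on $B$ for suitable constants $q_{1}>q_{2}\ge1$ (possible since $p_{2}^{+}<\infty$), take characteristic functions $\chi_{B_{\delta}}$ of subsets $B_{\delta}\subset B$ with $|B_{\delta}|=\delta\to 0$; then for $\delta\le 1$ one checks $\|\chi_{B_{\delta}}\|_{L^{p_{1}(\cdot)}}\ge \delta^{1/q_{1}}$ while $\|\chi_{B_{\delta}}\|_{L^{p_{2}(\cdot)}}\le \delta^{1/q_{2}}$, so the ratio blows up like $\delta^{1/q_{1}-1/q_{2}}$. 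Alternatively, one can exhibit a single $f=\sum_{k}c_{k}\chi_{E_{k}}$ on disjoint subsets with $\sum_{k}c_{k}^{q_{2}}|E_{k}|<\infty$ but $\sum_{k}(c_{k}/\lambda)^{q_{1}}|E_{k}|=\infty$ for every $\lambda>0$, giving $f\in L^{p_{2}(\cdot)}\setminus L^{p_{1}(\cdot)}$. Finally, the condition $p_{1}^{-}>1$ you invoke is not needed for either direction; only $p_{i}(\cdot)\ge 1$ and $p_{i}^{+}<\infty$ enter.
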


An interesting fact in the setting of variable Lebesgue spaces is given by the extension of the result presented in Lemma \ref{le2.3} to whole space $\mathbb{R}^n$.
\smallbreak
{Before presenting a result about it, 
we  must stress the fact that not all properties of the usual Lebesgue spaces $L^{p}(\Omega)$ can be generalized to the variable Lebesgue spaces. For example, the variable Lebesgue spaces are not translation invariant, thus the convolution of two functions $f$ and $g$ is not well-adapted, and the Young's inequality are not valid anymore (cf. \cite{CF13}, Section 5.3). In consequence, new ideas and techniques are need to tackle with the boundedness of many classical operators appeared in the mathematical analysis of PDEs. A classical approach to study these difficulties is to consider some constraints on the variable exponent, and the most common one is given by the so-called \textit{log-H\"{o}lder continuity condition}. We introduce the following definition.
\begin{definition}\label{de2.6}
Let $p(\cdot)\in \mathcal{P}(\mathbb{R}^{n})$ such that  there exists a limit $\frac{1}{p_{\infty}}=\lim_{|x|\rightarrow\infty}\frac{1}{p(x)}$.
\begin{itemize}
\item We say that $p(\cdot)$ is locally log-H\"{o}lder continuous if for all $x,y\in \mathbb{R}^{n}$,  there exists a constant $C$ such that $\big{|}\frac{1}{p(x)}-\frac{1}{p(y)}\big{|}\leq \frac{C}{\log(e+\frac{1}{|x-y|})}$;
\item We say that $p(\cdot)$ satisfies the log-H\"{o}lder decay condition if for all $x\in \mathbb{R}^{n}$,  there exists a constant $C$ such that $\big{|}\frac{1}{p(x)}-\frac{1}{p_{\infty}}\big{|}\leq \frac{C}{\log(e+|x|)}$;
\item We say that  $p(\cdot)$ is globally log-H\"{o}lder continuous in $\mathbb{R}^n$ if it is locally \textit{log-H\"{o}lder continuous} and satisfies the \textit{log-H\"{o}lder decay condition};
\item We define the class of variable exponents $\mathcal{P}^{\log}(\mathbb{R}^{n})$ as
\begin{equation*}
 \mathcal{P}^{\log}(\mathbb{R}^{n}):=\left\{p(\cdot)\in\mathcal{P}(\mathbb{R}^{n}):\ \  p(\cdot)\ \text{is globally log-H\"{o}lder continuous in}\  \mathbb{R}^n\right\}.
\end{equation*}
\end{itemize}
\end{definition}
%%%%%%%%%%%
With this notion of variable exponent at hand, we characterize a class of variable exponents for which is possible to obtain an analogous result to Lemma \ref{le2.3} in the setting of unbounded domains.
%%%%%%%%%%%
\begin{definition}\label{de2.4}
Given a constant exponent
$p\in (1,+\infty)$, we define the class of variable exponents $\mathcal{P}^{\text{emb}}_p(\mathbb{R}^n)$  as the set
\begin{equation*}
  \mathcal{P}^{\text{emb}}_p(\mathbb{R}^n)\! :=\!\left\{\! \bar{p}(\cdot)\in\mathcal{P}^{\log}(\mathbb{R}^n):
  p\leq(\bar{p})^{-} \leq (\bar{p})^{+}\! <\!+\infty\ \text{ and } \
  \frac{ p\bar{p}(x)}{ \bar{p}(x)- p}\to +\infty \text{ as } |x|\to +\infty \!\right\}.
\end{equation*}
\end{definition}

Then, a consequence of considering a variable exponent  in $\mathcal{P}^{\text{emb}}_p(\mathbb{R}^n)$
is given by following result   (cf. Theorem 2.45 and Remark 2.46 in \cite{CF13}).

\begin{lemma}\label{le2.5}
    Let $p\in (1,+\infty)$ and  $\bar{p}(\cdot)\in\mathcal{P}^{\text{emb}}_p(\mathbb{R}^n)$. Then we have
    $L^{\bar{p}(\cdot)}(\mathbb{R}^n)\hookrightarrow L^{p}(\mathbb{R}^n)$, and there exists a constant $C>0$ such that
\begin{equation}\label{eq2.5}
    \|f\|_{L^{p}} \leq C\|f\|_{L^{\bar{p}(\cdot)}}.
\end{equation}
\end{lemma}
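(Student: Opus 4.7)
The plan is to deduce the embedding from the variable-exponent H\"older inequality (Lemma \ref{le2.1}) by writing $f = f \cdot 1$ and controlling the constant function $1$ in a suitable variable Lebesgue space whose exponent blows up at infinity. Concretely, since $\bar{p}(x) \geq (\bar{p})^{-} \geq p$, I would define a companion exponent $s(\cdot)\in \mathcal{P}(\mathbb{R}^{n})$ via
\begin{equation*}
\frac{1}{s(x)} := \frac{1}{p} - \frac{1}{\bar{p}(x)} = \frac{\bar{p}(x)-p}{p\,\bar{p}(x)},
\end{equation*}
with the convention $s(x) = +\infty$ whenever $\bar{p}(x) = p$. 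By construction $\tfrac{1}{p} = \tfrac{1}{\bar{p}(x)} + \tfrac{1}{s(x)}$, so Lemma \ref{le2.1} applied to $f$ and the constant function $1$ gives
\begin{equation*}
\|f\|_{L^{p}} = \|f \cdot 1\|_{L^{p}} \leq C\,\|f\|_{L^{\bar{p}(\cdot)}}\,\|1\|_{L^{s(\cdot)}(\mathbb{R}^{n})}.
\end{equation*}
Thus the whole argument reduces to proving that $1 \in L^{s(\cdot)}(\mathbb{R}^{n})$.

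The second step is to exploit both conditions encoded in the definition of $\mathcal{P}^{\text{emb}}_{p}(\mathbb{R}^{n})$. The requirement $s(x) = \tfrac{p\,\bar{p}(x)}{\bar{p}(x)-p} \to +\infty$ as $|x| \to \infty$, combined with the existence of $\bar{p}_{\infty}$ guaranteed by $\bar{p}(\cdot)\in\mathcal{P}^{\log}(\mathbb{R}^{n})$, forces $\bar{p}_{\infty} = p$. The log-H\"older decay condition in Definition \ref{de2.6} then becomes the quantitative bound
\begin{equation*}
\frac{1}{s(x)} = \frac{1}{p} - \frac{1}{\bar{p}(x)} \leq \frac{C}{\log(e+|x|)} \qquad \text{for all } x\in\mathbb{R}^{n},
\end{equation*}
which yields a lower bound $s(x) \geq c\,\log(e+|x|)$ for some $c>0$.

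Using this lower bound, the plan for the last step is to estimate directly the modular
\begin{equation*}
\rho_{s(\cdot)}\!\left(\tfrac{1}{\lambda}\right) = \int_{\mathbb{R}^{n}} \lambda^{-s(x)}\,dx
\end{equation*}
by splitting into the ball $\{|x|\leq R\}$, where $s(x)\geq p$ controls the integrand on a set of finite measure, and the complement $\{|x|>R\}$, where for $\lambda>1$ the bound $\lambda^{-s(x)} \leq (e+|x|)^{-c\log \lambda}$ holds. Choosing first $\lambda$ large enough that $c\log \lambda > n$ renders the tail integrable, and then enlarging $\lambda$ further drives $\rho_{s(\cdot)}(1/\lambda) \leq 1$; by Definition \ref{de1.1} this gives $\|1\|_{L^{s(\cdot)}} \leq \lambda < +\infty$, closing the argument. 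I expect this quantitative step to be the main obstacle, as it is the only place where both the pointwise comparison $\bar{p}\geq p$ and the log-H\"older decay of $\bar{p}$ toward the critical value $p$ at infinity must be used simultaneously; without the latter, $s(x)$ could tend to infinity too slowly and $1 \notin L^{s(\cdot)}(\mathbb{R}^{n})$, which is precisely the well-known obstruction on unbounded domains absent in Lemma \ref{le2.3}.
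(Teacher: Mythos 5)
Your proof is correct and follows essentially the same route as the paper, which establishes this lemma by citing Theorem 2.45 and Remark 2.46 of \cite{CF13}: those results are proved exactly by factoring $f = f\cdot 1$, applying the generalized H\"older inequality with the companion exponent $\frac{1}{s(x)}=\frac{1}{p}-\frac{1}{\bar{p}(x)}$, and checking $1\in L^{s(\cdot)}(\mathbb{R}^{n})$ via the log-H\"older decay of $\bar{p}(\cdot)$ toward $\bar{p}_{\infty}=p$. Your modular estimate correctly supplies the finiteness of $\|1\|_{L^{s(\cdot)}}$, so there is no gap.
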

}

On the other hand, for any $p(\cdot)\in \mathcal{P}^{\log}(\mathbb{R}^{n})$, we have the following results in terms of the Hardy-Littlewood maximal function and  Riesz transforms (cf. \cite{CF13}, Theorem 3.16 and Theorem 5.42;\cite{DHHR11}, Theorem 4.3.8 and Corollary 6.3.10).

\begin{lemma}\label{le2.7}
Let $p(\cdot)\in \mathcal{P}^{\log}(\mathbb{R}^{n})$ with $1<p^{-}\leq p^{+}<+\infty$. Then  for any $f\in L^{p(\cdot)}(\mathbb{R}^{n})$, there exists a positive constant $C$ such that
\begin{equation}\label{eq2.6}
   \|\mathcal{M} (f)\|_{L^{p(\cdot)}}\leq C\|f\|_{L^{p(\cdot)}},
\end{equation}
where $\mathcal{M}$ is the Hardy-Littlewood maximal function defined by
\begin{equation*}
  \mathcal{M}(f)(x):=\sup_{x\in B}\frac{1}{|B|}\int_{B}|f(y)|dy,
\end{equation*}
and $B\subset \mathbb{R}^{n}$ is an open ball with center $x$. Furthermore,
\begin{equation}\label{eq2.7}
   \|\mathcal{R}_{j}(f)\|_{L^{p(\cdot)}}\leq C\|f\|_{L^{p(\cdot)}} \ \ \text{for any}\ \ 1\leq j\leq n,
\end{equation}
where $\mathcal{R}_{j}$ ($1\leq j\leq n$) are the usual Riesz transforms, i.e. $\mathcal{F}\left(\mathcal{R}_{j}f\right)(\xi)=-\frac{i\xi_{j}}{|\xi|}\mathcal{F}(f)(\xi)$.
\end{lemma}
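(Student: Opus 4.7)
The plan is to treat the two estimates in Lemma 2.7 separately, since the bound on the maximal function $\mathcal{M}$ is the genuinely delicate ingredient and the bound on the Riesz transforms $\mathcal{R}_j$ then follows by abstract transfer. The guiding idea for the maximal inequality, due essentially to Diening, is to prove a pointwise inequality of the form
\[
\mathcal{M}(f)(x)^{p(x)} \;\leq\; C\bigl(\mathcal{M}(|f|^{p(\cdot)})(x) + h(x)\bigr), \qquad x \in \mathbb{R}^n,
\]
valid for every $f$ with $\|f\|_{L^{p(\cdot)}}\leq 1$, where $h$ is a fixed nonnegative $L^1$ function depending only on $p(\cdot)$. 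The two clauses of the log-Hölder condition are precisely what one needs here: local log-Hölder continuity controls the oscillation of $p(\cdot)$ on small balls and allows one to ``freeze'' the exponent on each such ball via Jensen's inequality, while the log-Hölder decay at infinity controls the oscillation of $p(\cdot)$ on balls of large diameter so that one may compare $p(y)$ with $p_\infty$ on the far-away pieces. Splitting an arbitrary ball into a dyadic decomposition according to its radius and its distance from the origin and applying Jensen on each piece yields the pointwise estimate above.

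Having the pointwise inequality, the proof of \eqref{eq2.6} is routine: integrating it and using the $L^1\to L^1$-weak and $L^{p^-}\to L^{p^-}$ boundedness of $\mathcal{M}$ on constant-exponent Lebesgue spaces controls $\rho_{p(\cdot)}(\mathcal{M}f)$ by $\rho_{p(\cdot)}(f)+\|h\|_{L^1}$, after which the standard modular-to-norm passage (via Definition 1.1 and a scaling argument in $\lambda$) gives $\|\mathcal{M}f\|_{L^{p(\cdot)}}\leq C\|f\|_{L^{p(\cdot)}}$. The same argument applied to the conjugate exponent $p'(\cdot)$, which belongs to $\mathcal{P}^{\log}(\mathbb{R}^n)$ whenever $p(\cdot)$ does, shows that $\mathcal{M}$ is bounded on both $L^{p(\cdot)}$ and $L^{p'(\cdot)}$.

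For \eqref{eq2.7} I would then invoke the extrapolation principle of Cruz-Uribe--Fiorenza--Martell--Pérez: any sublinear operator that is bounded on the weighted space $L^{p_0}(w)$ for every Muckenhoupt weight $w\in A_{p_0}$ (for some, and hence all, $1<p_0<\infty$) is automatically bounded on every variable Lebesgue space $L^{p(\cdot)}$ for which $\mathcal{M}$ is bounded on $L^{p(\cdot)}$ and $L^{p'(\cdot)}$. Since the Riesz transforms are classical Calderón--Zygmund operators and thus enjoy the full weighted $A_p$ theory, the bound \eqref{eq2.7} follows. The principal obstacle throughout is the maximal inequality: without the log-Hölder hypotheses one can construct exponents with $1<p^-\leq p^+<\infty$ for which $\mathcal{M}$ fails to be bounded on $L^{p(\cdot)}$, so the careful matching between the modulus of continuity of $p(\cdot)$ and the dyadic scales used in the pointwise estimate is the heart of the argument.
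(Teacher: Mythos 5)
The paper does not actually prove Lemma \ref{le2.7}: the statement is quoted from the literature, with the proof delegated to \cite{CF13} (Theorems 3.16 and 5.42) and \cite{DHHR11} (Theorem 4.3.8 and Corollary 6.3.10). Your outline is precisely the argument behind those citations --- Diening's log-H\"older-based Jensen/pointwise estimate for $\mathcal{M}$, a modular-to-norm passage, and Rubio de Francia extrapolation together with the weighted $A_p$ theory of Calder\'on--Zygmund operators for the Riesz transforms --- so you are reconstructing the standard proof rather than diverging from the paper, which offers none of its own.

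One technical point in your sketch needs repair. The pointwise inequality as you display it, $\mathcal{M}(f)(x)^{p(x)}\leq C\bigl(\mathcal{M}(|f|^{p(\cdot)})(x)+h(x)\bigr)$, is not one you can integrate: doing so would require the strong $L^1$-boundedness of $\mathcal{M}$, which fails (indeed $\mathcal{M}g\notin L^1$ unless $g\equiv 0$), and the weak $(1,1)$ bound you invoke does not control the modular $\rho_{p(\cdot)}(\mathcal{M}f)$. The standard fix --- which is what the cited proofs actually do, and what your mention of the $L^{p^-}$ bound suggests you intend --- is to apply the key estimate to the exponent $p(\cdot)/s$ for a fixed $1<s<p^-$, obtaining $\mathcal{M}(f)(x)^{p(x)/s}\leq C\bigl(\mathcal{M}(|f|^{p(\cdot)/s})(x)+h(x)\bigr)$ for $\|f\|_{L^{p(\cdot)}}\leq 1$, and then to raise this to the power $s$ and use the strong $(s,s)$ boundedness of $\mathcal{M}$; this is exactly where the hypothesis $p^->1$ enters. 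With that correction your treatment of \eqref{eq2.6} is the standard one, and your extrapolation step for \eqref{eq2.7} is how \cite{CF13} proves Theorem 5.42 (alternatively, one can argue directly that Calder\'on--Zygmund operators are bounded on $L^{p(\cdot)}$ once $\mathcal{M}$ is bounded on both $L^{p(\cdot)}$ and $L^{p'(\cdot)}$).
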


We also need to use the boundedness of the Riesz potential in the variable Lebesgue spaces.

\begin{definition}\label{de2.8}
Given $0<\beta <n$, for any measurable function $f$, define the Riesz potential $\mathcal{I}_{\beta}$ also referred to as the fractional integral operator with index $\beta$, to be the convolution operator
\begin{equation}\label{eq2.8}
   \mathcal{I}_{\beta}(f)(x):=C(\beta, n)\int_{\mathbb{R}^{n}}\frac{|f(y)|}{|x-y|^{n-\beta}}dy,
\end{equation}
where
\begin{equation*}
   C(\beta, n)=\frac{\Gamma(\frac{n-\beta}{2})}{\pi^{\frac{n}{2}}2^{\beta}\Gamma(\frac{\beta}{2})}.
\end{equation*}
\end{definition}

The Riesz potential is well defined on the variable Lebesgue spaces, and if $p_{+}<\frac{n}{\beta}$ and $f\in L^{p(\cdot)}(\mathbb{R}^{n})$, then $\mathcal{I}_{\beta}(f)(x)$ converges for every $x$. Moreover, we have the following boundedness result (cf. \cite{CF13}, Theorem 5.46).

\begin{lemma}\label{le2.9}
Let $p(\cdot)\in \mathcal{P}^{\log}(\mathbb{R}^{n})$ with $1<p^{-}\leq p^{+}<+\infty$, and let $0<\beta<\frac{n}{p^{+}}$. Then for any $f\in L^{p(\cdot)}(\mathbb{R}^{n})$, there exists a positive constant $C$ such that
\begin{equation}\label{eq2.9}
   \|\mathcal{I}_{\beta}(f)\|_{L^{q(\cdot)}}\leq C\|f\|_{L^{p(\cdot)}} \ \ \text{with}\ \ \frac{1}{q(\cdot)}=\frac{1}{p(\cdot)}-\frac{\beta}{n}.
\end{equation}
\end{lemma}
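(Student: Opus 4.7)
The plan is to follow Hedberg's pointwise approach to the fractional integral, adapted to the variable exponent setting, and then combine it with the boundedness of the Hardy--Littlewood maximal function from Lemma \ref{le2.7}. By homogeneity it suffices to prove $\|\mathcal{I}_{\beta}f\|_{L^{q(\cdot)}}\leq C$ for every nonnegative $f$ with $\|f\|_{L^{p(\cdot)}}\leq 1$, and by an approximation argument I may further assume that $f$ is bounded with compact support.

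For each $x\in\mathbb{R}^{n}$ and $r>0$ I would split $\mathcal{I}_{\beta}f(x)=\int_{|x-y|\leq r}+\int_{|x-y|>r}$. The near part is handled, exactly as in the constant exponent case, by a dyadic decomposition around $x$, yielding the pointwise bound $C\,r^{\beta}\mathcal{M}f(x)$. For the far part I would apply the generalized H\"older inequality of Lemma \ref{le2.1} to obtain
\begin{equation*}
\int_{|x-y|>r}\frac{f(y)}{|x-y|^{n-\beta}}\,dy\leq C\,\|f\|_{L^{p(\cdot)}}\,\bigl\|\chi_{B(x,r)^{c}}\,|x-\cdot|^{-(n-\beta)}\bigr\|_{L^{p'(\cdot)}}.
\end{equation*}
The crucial step is to estimate the last factor by $C\,r^{\beta-n/p(x)}$ up to a rapidly decaying error in $x$. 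This is where the hypothesis $p(\cdot)\in\mathcal{P}^{\log}(\mathbb{R}^{n})$ is essential, since a direct modular computation of $\rho_{p'(\cdot)}$ on $\chi_{B(x,r)^{c}}|x-\cdot|^{-(n-\beta)}$ produces integrals of $|x-y|^{-(n-\beta)p'(y)}$ whose exponents vary with $y$, and only the local log-H\"older continuity together with the log-H\"older decay condition permit replacing $p'(y)$ by its pointwise value at the center $p'(x)$ modulo a term of the form $C(e+|x|)^{-N}$ for arbitrarily large $N$.

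Balancing the two contributions by optimizing $r=\mathcal{M}f(x)^{-p(x)/n}$ and using $\tfrac{1}{q(x)}=\tfrac{1}{p(x)}-\tfrac{\beta}{n}$, the combined estimate becomes the Hedberg-type pointwise inequality
\begin{equation*}
\mathcal{I}_{\beta}f(x)\leq C\,\bigl(\mathcal{M}f(x)\bigr)^{p(x)/q(x)}+C\,h(x),
\end{equation*}
where $h(x)=(e+|x|)^{-N}$ with $N$ so large that $h\in L^{q(\cdot)}(\mathbb{R}^{n})$. Raising both sides to the power $q(x)$, integrating, and invoking Lemma \ref{le2.7} for $\mathcal{M}$ on $L^{p(\cdot)}(\mathbb{R}^{n})$ shows that the modular $\rho_{q(\cdot)}(\mathcal{I}_{\beta}f/C)$ is bounded, which by Definition \ref{de1.1} is equivalent to the claimed norm inequality.

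The main obstacle will be the modular analysis of the truncated kernel in $L^{p'(\cdot)}$, where the two log-H\"older conditions interact most delicately: the local condition controls how much $p(\cdot)$ can vary on scales comparable to $r$ near $x$, while the decay condition governs what happens as $|x|\to\infty$. Once this estimate is in place, the remainder of the argument is a routine combination of H\"older's inequality in variable exponents and the variable exponent maximal inequality.
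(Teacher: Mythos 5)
The paper does not prove Lemma \ref{le2.9} at all: it is quoted directly from the literature (\cite{CF13}, Theorem 5.46), so there is no in-paper argument to compare yours against. Judged on its own terms, your proposal follows the standard Hedberg-type route that appears in the variable exponent literature (splitting $\mathcal{I}_{\beta}f$ at a radius $r$, bounding the near part by $r^{\beta}\mathcal{M}f(x)$, the far part by H\"older against the truncated kernel, optimizing $r$, and then invoking the maximal inequality of Lemma \ref{le2.7}); this is a legitimate strategy, and your bookkeeping with $r=\mathcal{M}f(x)^{-p(x)/n}$ and $p(x)/q(x)=1-\beta p(x)/n$ is consistent. Note that \cite{CF13} itself obtains Theorem 5.46 by Rubio de Francia extrapolation from the Muckenhoupt--Wheeden weighted inequalities for $\mathcal{I}_{\beta}$, so your route is genuinely different from the one the cited source uses; the pointwise approach is closer to the earlier proofs of Capone, Cruz-Uribe and Fiorenza, which in fact pass through the fractional maximal operator rather than estimating the truncated kernel directly.

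The genuine gap is the step you yourself flag as ``the main obstacle'': the claim that
\begin{equation*}
\bigl\|\chi_{B(x,r)^{c}}\,|x-\cdot|^{-(n-\beta)}\bigr\|_{L^{p'(\cdot)}}\leq C\,r^{\beta-\frac{n}{p(x)}}+C\,(e+|x|)^{-N}
\end{equation*}
is asserted but not proved, and essentially the entire content of the lemma is concentrated there. The modular $\rho_{p'(\cdot)}$ of the truncated kernel involves $|x-y|^{-(n-\beta)p'(y)}$ over all of $B(x,r)^{c}$, which includes an intermediate region where neither the local log-H\"older condition (effective only for $|x-y|$ small) nor the decay condition (effective only for $|y|$ large) applies on its own; making the replacement $p'(y)\rightsquigarrow p'(x)$ uniformly in $r$ and $x$, with an error that is summable in $L^{q(\cdot)}$, is exactly the technical core of Diening's key estimate and is not a routine computation. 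A second, smaller point: since Lemma \ref{le2.7} only gives $\|\mathcal{M}f\|_{L^{p(\cdot)}}\leq C$ rather than a modular bound by $1$, the final step of raising $\bigl(\mathcal{M}f(x)\bigr)^{p(x)/q(x)}$ to the power $q(x)$ and integrating requires normalizing $\mathcal{M}f$ by that constant inside the variable power and using that $p(\cdot)/q(\cdot)$ is bounded above and below; this is standard but should be written out. As it stands the proposal is a correct roadmap with the decisive estimate left open.
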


According to the estimate \eqref{eq2.9}, there is a very strong relationship between the variable indices $p(\cdot)$ and $q(\cdot)$, which leads to a strict restriction to apply Lemma \ref{le2.9}. In order to relax the applicable scope of \eqref{eq2.9}, we introduce the definition of the mixed Lebesgue spaces (cf. \cite{C22}).

\begin{definition}\label{de2.10}
Let $p(\cdot)\in \mathcal{P}^{\log}(\mathbb{R}^{n})$ with $1<p^{-}\leq p^{+}<+\infty$, and let $1<r<+\infty$.  Then we define the mixed Lebesgue space $\mathcal{L}^{p(\cdot)}_{r}(\mathbb{R}^{n})$ as the intersection of the spaces $L^{p(\cdot)}(\mathbb{R}^{n})$ and $L^{r}(\mathbb{R}^{n})$,  i.e.
\begin{equation*}
   \mathcal{L}^{p(\cdot)}_{r}(\mathbb{R}^{n}):= L^{p(\cdot)}(\mathbb{R}^{n})\bigcap L^{r}(\mathbb{R}^{n}),
\end{equation*}
which is endowed with the norm
\begin{equation*}
  \|f\|_{\mathcal{L}^{p(\cdot)}_{r}}:= \max \{\|f\|_{L^{p(\cdot)}},\|f\|_{L^{r}}\}.
\end{equation*}
\end{definition}

The mixed Lebesgue space $\mathcal{L}^{p(\cdot)}_{r}(\mathbb{R}^{n})$ clearly inherits some properties of the spaces   $L^{p(\cdot)}(\mathbb{R}^{n})$ and $L^{r}(\mathbb{R}^{n})$. For example, we have the H\"{o}lder's inequality
\begin{equation}\label{eq2.10}
   \|fg\|_{\mathcal{L}^{p(\cdot)}_{r}}\leq C \|f\|_{\mathcal{L}^{p_{1}(\cdot)}_{r_{1}}}\|g\|_{\mathcal{L}^{p_{2}(\cdot)}_{r_{2}}}
\end{equation}
with $\frac{1}{p(\cdot)}=\frac{1}{p_{1}(\cdot)}+\frac{1}{p_{2}(\cdot)}$ and $\frac{1}{r}=\frac{1}{r_{1}}+\frac{1}{r_{2}}$, and the Riesz transforms are also bounded in these spaces, i.e.
\begin{equation}\label{eq2.11}
   \|\mathcal{R}_{j}(f)\|_{\mathcal{L}^{p(\cdot)}_{r}}\leq C\|f\|_{\mathcal{L}^{p(\cdot)}_{r}} \ \ \text{for any}\ \ 1\leq j\leq n.
\end{equation}
Furthermore, we have the following  mixed Hardy-Littlewood-Sobolev inequality (cf. \cite{C22}, Theorem 4) .

\begin{lemma}\label{le2.11}
Let $p(\cdot)\in \mathcal{P}^{\log}(\mathbb{R}^{n})$ with $1<p^{-}\leq p^{+}<+\infty$, and let $1<r<+\infty$. Then for any $0<\beta<\min\{\frac{n}{p^{+}}, \frac{n}{r}\}$, and $f\in \mathcal{L}^{p(\cdot)}_{r}(\mathbb{R}^{n})$, there exists a positive constant $C$ such that
\begin{equation}\label{eq2.12}
   \|\mathcal{I}_{\beta}(f)\|_{L^{q(\cdot)}}\leq C\|f\|_{\mathcal{L}^{p(\cdot)}_{r}} \ \ \text{with}\ \ q(\cdot)=\frac{np(\cdot)}{n-\beta r}.
\end{equation}
\end{lemma}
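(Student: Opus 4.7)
The plan is to adapt the classical Hedberg pointwise inequality to the mixed setting and then exploit the fact that the target exponent $q(\cdot)=np(\cdot)/(n-\beta r)$ is built precisely so that raising a function to a certain power converts its $L^{q(\cdot)}$-norm into an $L^{p(\cdot)}$-norm, at which point Lemma \ref{le2.7} takes over.

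First, for any $R>0$, I would split the Riesz integral into a local and a global part,
\[
\mathcal{I}_{\beta}(f)(x)\leq C\!\!\int_{|x-y|<R}\!\!\frac{|f(y)|}{|x-y|^{n-\beta}}dy+C\!\!\int_{|x-y|\geq R}\!\!\frac{|f(y)|}{|x-y|^{n-\beta}}dy=:A(x,R)+B(x,R).
\]
A standard dyadic decomposition of $\{|x-y|<R\}$ into annuli $\{R/2^{k+1}\leq|x-y|<R/2^k\}$ bounds each piece by the mean over the enclosing ball, and summing the resulting geometric series gives $A(x,R)\leq CR^{\beta}\mathcal{M}(f)(x)$. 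For the far piece, Hölder's inequality with the constant exponents $r$ and $r'$ yields
\[
B(x,R)\leq C\|f\|_{L^{r}}\Big(\int_{|y|\geq R}|y|^{-(n-\beta)r'}dy\Big)^{1/r'}=CR^{\beta-n/r}\|f\|_{L^{r}},
\]
where the tail integral converges precisely because $\beta<n/r$. Optimizing in $R$ by equating the two terms (that is, $R^{n/r}=\|f\|_{L^{r}}/\mathcal{M}(f)(x)$) gives the pointwise Hedberg-type bound
\[
\mathcal{I}_{\beta}(f)(x)\leq C\,\mathcal{M}(f)(x)^{1-\theta}\,\|f\|_{L^{r}}^{\theta},\qquad \theta:=\frac{\beta r}{n}\in(0,1).
\]

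Next, I would take the $L^{q(\cdot)}$-norm on both sides. Pulling the constant $\|f\|_{L^r}^{\theta}$ outside reduces the task to estimating $\|\mathcal{M}(f)^{1-\theta}\|_{L^{q(\cdot)}}$. The algebraic miracle is the identity $(1-\theta)q(x)=\frac{n-\beta r}{n}\cdot\frac{np(x)}{n-\beta r}=p(x)$, so the exponent pair is designed exactly so that the homogeneity relation $\|g^{1-\theta}\|_{L^{q(\cdot)}}=\|g\|_{L^{(1-\theta)q(\cdot)}}^{1-\theta}=\|g\|_{L^{p(\cdot)}}^{1-\theta}$ brings us back into $L^{p(\cdot)}$. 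Since $p(\cdot)\in\mathcal{P}^{\log}(\mathbb{R}^n)$ with $p^->1$, Lemma \ref{le2.7} gives $\|\mathcal{M}(f)\|_{L^{p(\cdot)}}\leq C\|f\|_{L^{p(\cdot)}}$, and combining everything yields
\[
\|\mathcal{I}_{\beta}(f)\|_{L^{q(\cdot)}}\leq C\|f\|_{L^{p(\cdot)}}^{1-\theta}\|f\|_{L^{r}}^{\theta}\leq C\|f\|_{\mathcal{L}^{p(\cdot)}_{r}}.
\]

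The main delicacy to make rigorous is the homogeneity identity $\|g^{1-\theta}\|_{L^{q(\cdot)}}=\|g\|_{L^{(1-\theta)q(\cdot)}}^{1-\theta}$, since for variable exponents one cannot naively pull the power through the Luxemburg infimum. This must be verified by direct substitution: setting $\mu=\lambda^{1/(1-\theta)}$ in $\inf\{\lambda>0:\int|g^{1-\theta}/\lambda|^{q(x)}dx\leq 1\}$ turns it into $\inf\{\mu^{1-\theta}>0:\int|g/\mu|^{(1-\theta)q(x)}dx\leq 1\}$, which is exactly $\|g\|_{L^{(1-\theta)q(\cdot)}}^{1-\theta}$. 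Apart from this bookkeeping, one only has to check that $\theta\in(0,1)$ (which uses $\beta<n/r$), that $q(\cdot)$ inherits $\mathcal{P}^{\log}$-regularity from $p(\cdot)$ as a constant scalar multiple, and that the assumption $\beta<n/p^{+}$ is exactly what ensures the a.e.\ finiteness of $\mathcal{I}_{\beta}(f)$ required to justify all the pointwise manipulations above.
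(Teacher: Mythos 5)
Your proof is correct: the Hedberg splitting, the optimization giving $\mathcal{I}_{\beta}(f)\leq C\,\mathcal{M}(f)^{1-\theta}\|f\|_{L^{r}}^{\theta}$ with $\theta=\beta r/n$, the Luxemburg-norm homogeneity identity with $(1-\theta)q(\cdot)=p(\cdot)$, and the maximal-function bound of Lemma \ref{le2.7} all fit together as claimed. The paper itself offers no proof of this lemma (it is quoted from Chamorro \cite{C22}, Theorem 4), and your argument is essentially the standard one used in that reference, so there is nothing to fault beyond the cosmetic remark that a.e.\ finiteness of $\mathcal{I}_{\beta}(f)$ already follows from $f\in L^{r}$ and $\beta<n/r$ via your pointwise bound, independently of the hypothesis $\beta<n/p^{+}$.
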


 Notice that if we set $p(\cdot)=r$ we recover the inequality \eqref{eq2.9} in the framework of the usual Lebesgue spaces. Moreover, since the number $r$ does not depend on $p^{-}$, $p^{+}$  and $p(x)$,  the condition in \eqref{eq2.12} gives us more flexibility than the condition in \eqref{eq2.9}.
\smallbreak

\subsection{Estimates of the Fractional Heat Kernel}

Let us recall some crucial estimates of the fractional heat kernel $G^{\alpha}_{t}(x)$ involved in the integral formulation of system \eqref{eq1.1}, where
\begin{equation*}
G^{\alpha}_{t}(x)=\mathcal{F}^{-1}(e^{-t|\xi|^{\alpha}})
=(2\pi)^{-\frac{n}{2}}\int_{\mathbb{R}^{n}}e^{ix\cdot\xi}e^{-t|\xi|^{\alpha}}d\xi.
\end{equation*}
The first one is the point-wise estimate of the fractional heat kernel $G^{\alpha}_{t}(x)$ (cf. \cite{MYZ08}, Remark 2.1).

\begin{lemma}\label{le2.12}
For $\alpha>0$,  the kernel function $G^{\alpha}(x)$ satisfies the following point-wise estimate
\begin{align}\label{eq2.13}
|\nabla G^{\alpha}_{t}(x)|\leq C(t^{\frac{1}{\alpha}}+|x|)^{-n-1}, \ \ x\in\mathbb{R}^{n}.
\end{align}
\end{lemma}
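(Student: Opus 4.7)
The plan is to exploit the self-similar structure of the fractional heat kernel to reduce the estimate to the case $t=1$, and then to establish the reduced bound by a frequency decomposition. The parabolic scaling identity $G^{\alpha}_{t}(x)=t^{-n/\alpha}G^{\alpha}_{1}(t^{-1/\alpha}x)$ follows from the Fourier representation by the change of variables $\xi\mapsto t^{-1/\alpha}\xi$; differentiating in $x$ gives $\nabla G^{\alpha}_{t}(x)=t^{-(n+1)/\alpha}(\nabla G^{\alpha}_{1})(t^{-1/\alpha}x)$, so it suffices to prove $|\nabla G^{\alpha}_{1}(y)|\leq C(1+|y|)^{-n-1}$ for every $y\in\mathbb{R}^{n}$. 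Setting $y=t^{-1/\alpha}x$ and absorbing the factor $t^{(n+1)/\alpha}$ then recovers the claimed bound $(t^{1/\alpha}+|x|)^{-n-1}$.

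For the reduced estimate, when $|y|\leq 1$ I would bound $|\nabla G^{\alpha}_{1}(y)|$ directly by differentiating under the integral and using that $\int_{\mathbb{R}^{n}}|\xi|e^{-|\xi|^{\alpha}}d\xi<+\infty$ thanks to the super-polynomial decay of the symbol. When $|y|>1$, I would insert a Littlewood--Paley dyadic decomposition in frequency: choose a smooth bump $\phi$ supported in the annulus $\{1/2\leq|\xi|\leq 2\}$ with $\sum_{j\in\mathbb{Z}}\phi(2^{-j}\xi)=1$ for $\xi\neq 0$, and write $\nabla G^{\alpha}_{1}(y)=\sum_{j\in\mathbb{Z}}I_{j}(y)$ where
\begin{equation*}
I_{j}(y)=(2\pi)^{-n/2}\int_{\mathbb{R}^{n}}i\xi\,\phi(2^{-j}\xi)\,e^{iy\cdot\xi}\,e^{-|\xi|^{\alpha}}d\xi.
\end{equation*}
After rescaling $\xi=2^{j}\eta$, each $I_{j}$ becomes a localized oscillatory integral with smooth, compactly supported amplitude $\phi(\eta)$ and a symbol $e^{-2^{j\alpha}|\eta|^{\alpha}}$ that is smooth on the annulus.

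I would then control $I_{j}$ in three regimes. For very low frequencies $2^{j}\leq 1/|y|$, a crude volume estimate yields $|I_{j}(y)|\leq C\,2^{j(n+1)}$, and the geometric sum contributes at most $C|y|^{-(n+1)}$. For intermediate frequencies $1/|y|\leq 2^{j}\leq 1$, integrating by parts $N$ times via the identity $(-\Delta_{\xi})e^{iy\cdot\xi}=|y|^{2}e^{iy\cdot\xi}$ gives $|I_{j}(y)|\leq C_{N}\,2^{j(n+1)}(2^{j}|y|)^{-N}$; choosing $N>n+1$ and summing produces another $C|y|^{-(n+1)}$ contribution. For high frequencies $2^{j}\geq 1$, the factor $e^{-2^{j\alpha}|\eta|^{\alpha}}$ on the rescaled annulus supplies super-exponential decay in $j$, so the corresponding sum is negligible compared to $|y|^{-(n+1)}$.

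The main obstacle is the lack of smoothness of $e^{-|\xi|^{\alpha}}$ at the origin when $\alpha$ is not an even integer, which prevents a single global integration by parts in $\xi$ starting from the Fourier integral. The dyadic decomposition is precisely the device that sidesteps this difficulty: on each annular piece the symbol is smooth so integration by parts is justified, while the near-origin pieces are absorbed by the crude volume bound, whose dyadic homogeneity $2^{j(n+1)}$ delivers exactly the decay $|y|^{-(n+1)}$ once the cutoff scale is taken at $2^{j}\sim 1/|y|$. A secondary bookkeeping point is that the constants in the integration by parts estimate depend only on $\phi$ and on bounds for derivatives of $e^{-2^{j\alpha}|\eta|^{\alpha}}$ on the fixed annulus for $2^{j}\leq 1$, which are uniformly controlled.
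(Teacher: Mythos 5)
The paper does not prove Lemma \ref{le2.12} at all: it is quoted from \cite{MYZ08} (Remark 2.1), so your self-contained argument cannot be compared to an in-paper proof; it can only be judged on its own. On those terms it is essentially correct and is the standard way to obtain such bounds: the scaling identity $G^{\alpha}_{t}(x)=t^{-n/\alpha}G^{\alpha}_{1}(t^{-1/\alpha}x)$ correctly reduces matters to $|\nabla G^{\alpha}_{1}(y)|\leq C(1+|y|)^{-n-1}$, the case $|y|\leq1$ is handled by the integrability of $|\xi|e^{-|\xi|^{\alpha}}$, and the low- and intermediate-frequency dyadic blocks are estimated correctly (the crude volume bound summing to $|y|^{-(n+1)}$ is exactly why the lemma only claims the exponent $n+1$ rather than the sharp $n+1+\alpha$ known for $0<\alpha<2$). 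Note also that your nonsmoothness concern only arises for $\alpha\notin2\mathbb{N}$; for $\alpha=2$ the bound is classical.

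The one step that does not work as written is the high-frequency regime $2^{j}\geq1$. Super-exponential decay of $e^{-2^{j\alpha}|\eta|^{\alpha}}$ in $j$ gives $\sum_{j\geq0}|I_{j}(y)|\leq C$ with $C$ independent of $y$, and a $y$-independent constant is \emph{not} negligible compared to $|y|^{-(n+1)}$ for large $|y|$; as justified, this block destroys the decay you are trying to prove. The fix is immediate and already in your argument: apply the same $N$-fold integration by parts used in the intermediate regime to the blocks with $2^{j}\geq1$. The derivatives of $e^{-2^{j\alpha}|\eta|^{\alpha}}$ on the annulus $\tfrac12\leq|\eta|\leq2$ are of the form (polynomial in $2^{j\alpha}$) times $e^{-c2^{j\alpha}}$, hence uniformly bounded (indeed decaying) in $j$, so $|I_{j}(y)|\leq C_{N}2^{j(n+1)}(2^{j}|y|)^{-N}$ persists for $2^{j}\geq1$, and summing over $j\geq0$ with $N>n+1$ gives $C|y|^{-N}\leq C|y|^{-(n+1)}$ for $|y|\geq1$. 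With that one-line repair the proof is complete. (A minor bookkeeping remark: integrating by parts $N$ times with $-\Delta_{\xi}$ yields the gain $(2^{j}|y|)^{-2N}$, not $(2^{j}|y|)^{-N}$; this only helps you.)
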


The second one is a classical lemma in terms of the Hardy--Littlewood maximal function (cf. \cite{L18}, Lemma 7.4).

\begin{lemma}\label{le2.13}
Let $\varphi$ be a radially decreasing function on $\mathbb{R}^{n}$ and $f$ a locally integrable function. Then we have
\begin{equation}\label{eq2.14}
   |(\varphi\ast f)(x)|\leq\|\varphi\|_{L^{1}}\mathcal{M}(f)(x).
\end{equation}
\end{lemma}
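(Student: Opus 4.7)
The plan is to prove this via the layer cake decomposition of $\varphi$, reducing the assertion to the much simpler case where $\varphi$ is the indicator function of a ball centered at the origin. Since $\varphi$ is radially decreasing, its super-level sets $\{\varphi>t\}$ are (up to a null set) balls $B_{r(t)}(0)$ centered at the origin, and one has the representation
\begin{equation*}
\varphi(x)=\int_{0}^{\infty}\chi_{\{\varphi>t\}}(x)\,dt=\int_{0}^{\infty}\chi_{B_{r(t)}(0)}(x)\,dt,
\end{equation*}
together with the companion identity $\|\varphi\|_{L^{1}}=\int_{0}^{\infty}|B_{r(t)}|\,dt$.

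The key elementary step is to estimate the convolution of $|f|$ with $\chi_{B_{r}(0)}$. By definition of convolution and the substitution $z=x-y$,
\begin{equation*}
\bigl(\chi_{B_{r}(0)}\ast|f|\bigr)(x)=\int_{|y|<r}|f(x-y)|\,dy=\int_{B_{r}(x)}|f(z)|\,dz\leq |B_{r}|\,\mathcal{M}(f)(x),
\end{equation*}
where the last inequality uses the definition of the (centered) Hardy--Littlewood maximal function, since $B_{r}(x)$ is a ball centered at $x$.

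Combining these two observations, Fubini's theorem gives
\begin{equation*}
\bigl|(\varphi\ast f)(x)\bigr|\leq(\varphi\ast|f|)(x)=\int_{0}^{\infty}\bigl(\chi_{B_{r(t)}(0)}\ast|f|\bigr)(x)\,dt\leq\mathcal{M}(f)(x)\int_{0}^{\infty}|B_{r(t)}|\,dt=\|\varphi\|_{L^{1}}\mathcal{M}(f)(x),
\end{equation*}
which is the desired bound. The only mildly delicate point is justifying the layer cake representation of $\varphi$ as an integral over indicators of balls centered at the origin; this is immediate for continuous radially decreasing $\varphi$, and for the general measurable case one first approximates by simple radial functions or invokes the standard layer cake formula combined with the fact that super-level sets of a radially decreasing function are (modulo null sets) centered balls. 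Once this is in hand, Fubini's theorem applies without further difficulty since both $|f|$ and the indicators are nonnegative.
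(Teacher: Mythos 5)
Your proof is correct. The paper does not prove this lemma itself (it simply cites Lemma~7.4 of Lemari\'e-Rieusset's book), and your layer-cake argument --- decomposing $\varphi=\int_{0}^{\infty}\chi_{\{\varphi>t\}}\,dt$ into indicators of centered balls, bounding $\chi_{B_{r}(0)}\ast|f|$ by $|B_{r}|\,\mathcal{M}(f)$, and concluding by Tonelli --- is precisely the classical proof given there; note only that the nonnegativity of $\varphi$ needed for $|(\varphi\ast f)(x)|\leq(\varphi\ast|f|)(x)$ is automatic, since an integrable radially decreasing function cannot stay below a negative value at infinity.
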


Finally, we recall the classical $L^{p}$- $L^{q}$ estimates of the fractional heat kernel $G^{\alpha}_{t}(x)$ (cf. \cite{MYZ08}, Lemma 3.1).
\begin{lemma}\label{le2.14}
For all $\alpha>0$, $\nu>0$, $1\leq p\leq q\leq\infty$. Then for any $f\in L^{p}(\mathbb{R}^{n})$, we have
\begin{align}\label{eq2.15}
   &\|G^{\alpha}_{t}\ast f\|_{L^{q}}\leq Ct^{-\frac{n}{\alpha}(\frac{1}{p}-\frac{1}{q})}\|f\|_{L^{p}},\\
   \label{eq2.16}
      &\|\Lambda^{\nu}G^{\alpha}_{t}\ast f\|_{L^{q}}\leq Ct^{-\frac{\nu}{\alpha}-\frac{n}{\alpha}(\frac{1}{p}-\frac{1}{q})}\|f\|_{L^{p}}.
\end{align}

\end{lemma}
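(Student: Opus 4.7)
The plan is to combine the standard scaling of the fractional heat kernel with Young's convolution inequality; no cancellation or subtle harmonic analysis is needed here, only good $L^r$ bounds on $G^\alpha_1$ and $\Lambda^\nu G^\alpha_1$.

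First I would record the scaling identity. Since $\widehat{G^\alpha_t}(\xi)=e^{-t|\xi|^\alpha}$, a change of variables gives
\begin{equation*}
G^{\alpha}_{t}(x)=t^{-\frac{n}{\alpha}}\,G^{\alpha}_{1}\!\left(t^{-\frac{1}{\alpha}}x\right),\qquad
\Lambda^{\nu} G^{\alpha}_{t}(x)=t^{-\frac{n+\nu}{\alpha}}\,(\Lambda^{\nu}G^{\alpha}_{1})\!\left(t^{-\frac{1}{\alpha}}x\right).
\end{equation*}
Integrating the $r$-th power and changing variables yields, for $1\le r\le\infty$,
\begin{equation*}
\|G^{\alpha}_{t}\|_{L^{r}}=t^{-\frac{n}{\alpha}\left(1-\frac{1}{r}\right)}\|G^{\alpha}_{1}\|_{L^{r}},\qquad
\|\Lambda^{\nu}G^{\alpha}_{t}\|_{L^{r}}=t^{-\frac{\nu}{\alpha}-\frac{n}{\alpha}\left(1-\frac{1}{r}\right)}\|\Lambda^{\nu}G^{\alpha}_{1}\|_{L^{r}}.
\end{equation*}

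Next, choose $r\in[1,\infty]$ by the Young relation $1+\tfrac{1}{q}=\tfrac{1}{r}+\tfrac{1}{p}$, which is admissible since $1\le p\le q\le\infty$. Young's convolution inequality then gives
\begin{equation*}
\|G^{\alpha}_{t}\ast f\|_{L^{q}}\le \|G^{\alpha}_{t}\|_{L^{r}}\|f\|_{L^{p}},\qquad
\|\Lambda^{\nu}G^{\alpha}_{t}\ast f\|_{L^{q}}\le \|\Lambda^{\nu}G^{\alpha}_{t}\|_{L^{r}}\|f\|_{L^{p}}.
\end{equation*}
Substituting the scaling identities above produces exactly the exponents $-\tfrac{n}{\alpha}(\tfrac{1}{p}-\tfrac{1}{q})$ and $-\tfrac{\nu}{\alpha}-\tfrac{n}{\alpha}(\tfrac{1}{p}-\tfrac{1}{q})$ claimed in \eqref{eq2.15}--\eqref{eq2.16}, provided the constants $\|G^{\alpha}_{1}\|_{L^{r}}$ and $\|\Lambda^{\nu}G^{\alpha}_{1}\|_{L^{r}}$ are finite.

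The main obstacle is therefore the endpoint finiteness of these two constants for every admissible $r$. For $G^{\alpha}_{1}$ this is standard: the Fourier transform $e^{-|\xi|^{\alpha}}$ is smooth and Schwartz-decaying, so $G^{\alpha}_{1}$ is bounded, while for large $|x|$ one has the sharp decay $G^{\alpha}_{1}(x)\lesssim (1+|x|)^{-n-\alpha}$, which is integrable for every $r\in[1,\infty]$. For $\Lambda^{\nu}G^{\alpha}_{1}$ a similar pointwise bound $|\Lambda^{\nu}G^{\alpha}_{1}(x)|\lesssim (1+|x|)^{-n-\alpha}$ can be derived in the same spirit as Lemma \ref{le2.12}: smoothness of $|\xi|^{\nu}e^{-|\xi|^{\alpha}}$ away from the origin yields rapid decay of the inverse Fourier transform, and the singularity of $|\xi|^{\nu}$ at the origin contributes only a bounded, smoother tail. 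With this pointwise bound, $\|\Lambda^{\nu}G^{\alpha}_{1}\|_{L^{r}}<\infty$ for every $r\in[1,\infty]$ and the proof is complete after applying Young's inequality and the scaling identity. The endpoint cases $p=1$ and $q=\infty$ are handled by the same argument, using $\|G^{\alpha}_{t}\|_{L^{1}}=\|G^{\alpha}_{1}\|_{L^{1}}$ (a pure constant) and $\|G^{\alpha}_{t}\|_{L^{\infty}}=t^{-n/\alpha}\|G^{\alpha}_{1}\|_{L^{\infty}}$.
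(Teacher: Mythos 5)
Your argument is correct, and it is essentially the standard proof: the paper itself does not prove Lemma \ref{le2.14} but quotes it from the reference \cite{MYZ08} (Lemma 3.1), whose proof is exactly your combination of the self-similar scaling $G^{\alpha}_{t}(x)=t^{-n/\alpha}G^{\alpha}_{1}(t^{-1/\alpha}x)$, $\Lambda^{\nu}G^{\alpha}_{t}(x)=t^{-(n+\nu)/\alpha}(\Lambda^{\nu}G^{\alpha}_{1})(t^{-1/\alpha}x)$ with Young's inequality for the exponent $1+\frac1q=\frac1r+\frac1p$, reducing everything to $G^{\alpha}_{1},\Lambda^{\nu}G^{\alpha}_{1}\in L^{1}\cap L^{\infty}$. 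One small imprecision: the pointwise bound $|\Lambda^{\nu}G^{\alpha}_{1}(x)|\lesssim(1+|x|)^{-n-\alpha}$ is stated too strongly in general; the decay is dictated by the worst singularity of $|\xi|^{\nu}e^{-|\xi|^{\alpha}}$ at the origin (for instance of order $(1+|x|)^{-n-\nu}$ when $\nu<\alpha$ and $\nu$ is not an even integer). This does not affect your conclusion, since any decay strictly faster than $|x|^{-n}$ already gives $\|\Lambda^{\nu}G^{\alpha}_{1}\|_{L^{r}}<\infty$ for every $r\in[1,\infty]$, which is all the Young step requires.
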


\section{Proof of Theorem \ref{th1.3}}

In this section, we shall show Theorem \ref{th1.3} by applying the following existence and uniqueness result for an abstract operator equation in a generic Banach space (cf. \cite{L02}, Theorem 13.2).
\begin{proposition}\label{pro3.1}
Let $\mathcal{X}$ be a Banach space and
$\mathcal{B}:\mathcal{X}\times\mathcal{X}\rightarrow\mathcal{X}$  a
bilinear bounded operator. Assume that for any $u,v\in
\mathcal{X}$, we have
$$
  \|\mathcal{B}(u,v)\|_{\mathcal{X}}\leq
  C\|u\|_{\mathcal{X}}\|v\|_{\mathcal{X}}.
$$
Then for any $y\in \mathcal{X}$ such that $\|y\|_{\mathcal{X}}\leq
\eta<\frac{1}{4C}$, the equation $u=y+\mathcal{B}(u,u)$ has a
solution $u$ in $\mathcal{X}$. Moreover, this solution is the only
one such that $\|u\|_{\mathcal{X}}\leq 2\eta$, and depends
continuously on $y$ in the following sense: if
$\|\widetilde{y}\|_{\mathcal{X}}\leq \eta$,
$\widetilde{u}=\widetilde{y}+\mathcal{B}(\widetilde{u},\widetilde{u})$
and $\|\widetilde{u}\|_{\mathcal{X}}\leq 2\eta$, then
$$
  \|u-\widetilde{u}\|_{\mathcal{X}}\leq \frac{1}{1-4\eta
  C}\|y-\widetilde{y}\|_{\mathcal{X}}.
$$
\end{proposition}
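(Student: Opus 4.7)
The plan is to reduce the statement to the classical Banach contraction principle applied to the map $\Phi(u) := y + \mathcal{B}(u,u)$ on a suitable closed ball of $\mathcal{X}$. First I would fix $\eta < \frac{1}{4C}$ and introduce the closed ball $\mathcal{K} := \{u\in\mathcal{X}: \|u\|_{\mathcal{X}}\leq 2\eta\}$, endowed with the metric induced by the norm of $\mathcal{X}$. Since $\mathcal{X}$ is a Banach space and $\mathcal{K}$ is closed, $\mathcal{K}$ is itself a complete metric space, and existence and uniqueness of a fixed point of $\Phi$ in $\mathcal{K}$ will follow as soon as $\Phi$ is shown to leave $\mathcal{K}$ invariant and to be a strict contraction there.

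For the self-mapping property, the bilinear bound gives
$$\|\Phi(u)\|_{\mathcal{X}} \leq \|y\|_{\mathcal{X}} + C\|u\|_{\mathcal{X}}^{2} \leq \eta + 4C\eta^{2} = \eta\,(1+4C\eta) < 2\eta,$$
where the strict inequality uses the hypothesis $4C\eta < 1$. For the contraction property, the key algebraic step is the bilinear identity
$$\mathcal{B}(u,u) - \mathcal{B}(v,v) = \mathcal{B}(u-v,\,u) + \mathcal{B}(v,\,u-v),$$
which combined with $\|\mathcal{B}(a,b)\|_{\mathcal{X}} \leq C\|a\|_{\mathcal{X}}\|b\|_{\mathcal{X}}$ yields
$$\|\Phi(u) - \Phi(v)\|_{\mathcal{X}} \leq C\,(\|u\|_{\mathcal{X}}+\|v\|_{\mathcal{X}})\,\|u-v\|_{\mathcal{X}} \leq 4C\eta\,\|u-v\|_{\mathcal{X}},$$
so $\Phi$ is a strict contraction on $\mathcal{K}$ with ratio $4C\eta \in (0,1)$. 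Banach's fixed point theorem therefore produces a unique $u \in \mathcal{K}$ with $u = y + \mathcal{B}(u,u)$ and $\|u\|_{\mathcal{X}}\leq 2\eta$.

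For the continuous dependence on the datum, I would subtract the two fixed-point equations to obtain
$$u - \widetilde{u} = (y - \widetilde{y}) + \mathcal{B}(u,u) - \mathcal{B}(\widetilde{u},\widetilde{u}),$$
apply the same splitting identity with $\widetilde{u}$ in place of $v$, and use the bounds $\|u\|_{\mathcal{X}}, \|\widetilde{u}\|_{\mathcal{X}} \leq 2\eta$ to arrive at
$$\|u - \widetilde{u}\|_{\mathcal{X}} \leq \|y - \widetilde{y}\|_{\mathcal{X}} + 4C\eta\,\|u - \widetilde{u}\|_{\mathcal{X}}.$$
Since $4C\eta < 1$, the nonlinear term can be absorbed into the left-hand side, producing the Lipschitz estimate $\|u - \widetilde{u}\|_{\mathcal{X}} \leq \tfrac{1}{1-4C\eta}\,\|y - \widetilde{y}\|_{\mathcal{X}}$.

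There is no genuine analytic obstacle here; the argument is purely abstract and is driven entirely by the bilinear identity above together with the arithmetic $\eta + 4C\eta^{2} \leq 2\eta$. The only delicate point is to choose the radius of the working ball large enough that $\Phi$ sends it into itself yet small enough that the contraction constant $4C\eta$ stays strictly below $1$; the choice of radius $2\eta$, i.e.\ twice the size of the datum, together with the hypothesis $\eta < \frac{1}{4C}$, are precisely what makes both conditions simultaneously compatible and what produces the constant appearing in the final Lipschitz bound.
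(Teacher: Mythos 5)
Your proof is correct, and it is the standard argument: the paper itself does not prove this proposition but quotes it from Lemari\'e-Rieusset \cite{L02} (Theorem 13.2), whose proof is essentially the same contraction/iteration scheme on the ball of radius $2\eta$ that you use, driven by the identity $\mathcal{B}(u,u)-\mathcal{B}(v,v)=\mathcal{B}(u-v,u)+\mathcal{B}(v,u-v)$ and the smallness condition $4C\eta<1$. All three parts (invariance of the ball, contraction, and the absorption argument for the Lipschitz dependence on $y$) are carried out correctly.
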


In order to do so, we need to transform  the system \eqref{eq1.1} into an equivalent system. Firstly, notice that the chemical concentration $\phi$ is determined by the Poisson equation, the second equation of \eqref{eq1.1}, gives rise to the coefficient $\nabla\phi$ in the first equation of \eqref{eq1.1},  thus $\phi$ can be represented as the volume potential of $u$:
\begin{equation*}
\phi(t,x)=(-\Delta)^{-1}u(t,x)=
\begin{cases}
\displaystyle
\frac{1}{n(n-2)\omega_{n}}\int_{\mathbb{R}^{n}}\frac{u(t,y)}{|x-y|^{n-2}}dy,
&\quad \text{if }n\geq 3,\\[10pt]
\displaystyle-\frac{1}{2\pi}\int_{\mathbb{R}^{2}}u(t,y)\log|x-y|dy,
&\quad \text{if }n=2,
\end{cases}
\end{equation*}
where $\omega_{n}$ denotes the surface area of the unit sphere in
$\mathbb{R}^{n}$. Secondly, observe carefully that the nonlinear term $u\nabla(-\Delta)^{-1}u$ has a nice symmetric structure:
\begin{equation}\label{eq3.1}
    u\nabla(-\Delta)^{-1}u=-\nabla\cdot\left(\nabla(-\Delta)^{-1}u\otimes\nabla(-\Delta)^{-1}u-\frac{1}{2}\left|\nabla(-\Delta)^{-1}u\right|^{2}I\right),
\end{equation}
where $\otimes$ is a tensor product, and $I$ is a $n$-th order identity matrix, thus if we denote $v:=\nabla(-\Delta)^{-1}u$, and  taking the operator $\nabla(-\Delta)^{-1}$ to both sides of the system \eqref{eq1.1}, then we can further reduced the system \eqref{eq1.1} into the following system:
\begin{equation}\label{eq3.2}
\begin{cases}
  \partial_{t}v-\Delta v-\mathcal{R}\otimes\mathcal{R}\nabla\cdot\left(v\otimes v-\frac{1}{2}\left|v\right|^{2}I\right)=0,\\
    v(0, x)=v_0(x),
\end{cases}
\end{equation}
where $v_0:=\nabla(-\Delta)^{-1}u_{0}$, $\mathcal{R}:=(\mathcal{R}_{1}, \mathcal{R}_{2}, \cdots, \mathcal{R}_{n})$ and $\mathcal{R}_{j}$ ($j=1,2,\cdots,n$) are Riesz transforms.
Finally, based on the framework of the Kato's analytical semigroup, we can rewrite system \eqref{eq3.2} as an equivalent integral form:
\begin{equation}\label{eq3.3}
  v(t)=G^{\alpha}_{t}\ast v_{0}(x)+\int_{0}^{t}G^{\alpha}_{t-s}\ast\mathcal{R}\otimes\mathcal{R}\nabla\cdot\left(v\otimes v-\frac{1}{2}\left|v\right|^{2}I\right)ds.
\end{equation}

Now, let the assumptions of Theorem \ref{th1.3} be in force, we introduce the space $\mathcal{X}$ as follows:
$$
\mathcal{X}:=\mathcal{L}^{p(\cdot)}_{\frac{n}{\alpha-1}}(\mathbb{R}^{n}; L^{\infty}(0,\infty)),
$$
and the norm of the  space $\mathcal{X}$ is endowed by
\begin{equation*}
   \|v\|_{\mathcal{X}}=\max\big{\{}\|v\|_{L^{p(\cdot)}_{x}(L^{\infty}_{t})}, \|v\|_{L^{\frac{n}{\alpha-1}}_{x}(L^{\infty}_{t})}\big{\}}.
\end{equation*}
It can be easily seen that $(\mathcal{X},  \|\cdot\|_{\mathcal{X}})$ is a Banach space.
\smallbreak

In the sequel, we shall establish the linear and nonlinear estimates of the integral equation \eqref{eq3.3} in the space $\mathcal{X}$, respectively.
\begin{lemma}\label{le3.2}
Let the assumptions of Theorem \ref{th1.3} be in force. Then for any $v_{0}\in \mathcal{L}^{p(\cdot)}_{\frac{n}{\alpha-1}}(\mathbb{R}^{n})$,
there exists a positive constant $C$ such that
\begin{equation}\label{eq3.4}
   \|G^{\alpha}_{t}\ast v_{0}\|_{\mathcal{X}_{T}}\leq
   C\|v_{0}\|_{\mathcal{L}^{p(\cdot)}_{\frac{n}{\alpha-1}}}.
\end{equation}
\end{lemma}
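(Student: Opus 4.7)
The plan is to reduce the bound on the heat semigroup action in the mixed variable Lebesgue space to the boundedness of the Hardy--Littlewood maximal function, both in $L^{p(\cdot)}$ (via Lemma \ref{le2.7}) and in the fixed Lebesgue space $L^{\frac{n}{\alpha-1}}$ (the classical result).

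First I would recall the standard pointwise estimate for the fractional heat kernel itself, namely
\[
0 \leq G^{\alpha}_t(x) \leq C\,\bigl(t^{\frac{1}{\alpha}}+|x|\bigr)^{-n},
\]
which is the companion to Lemma \ref{le2.12} (one less derivative). The key observation is that for every fixed $t>0$ the function $x\mapsto G^{\alpha}_t(x)$ is radially symmetric and decreasing, and the scaling $G^{\alpha}_t(x) = t^{-n/\alpha}G^{\alpha}_1(t^{-1/\alpha}x)$ forces $\|G^{\alpha}_t\|_{L^1} = \|G^{\alpha}_1\|_{L^1}=:C_0$, a constant independent of $t$.

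Next I would apply Lemma \ref{le2.13} to the kernel $\varphi = G^{\alpha}_t$, obtaining the pointwise bound
\[
\bigl|(G^{\alpha}_t * v_0)(x)\bigr| \leq \|G^{\alpha}_t\|_{L^1}\,\mathcal{M}(v_0)(x) \leq C_0\,\mathcal{M}(v_0)(x)
\]
uniformly in $t>0$. Taking the supremum in $t$ yields
\[
\sup_{t>0}\bigl|(G^{\alpha}_t * v_0)(x)\bigr| \leq C_0\,\mathcal{M}(v_0)(x).
\]
At this point the time-variable has been decoupled cleanly from the space-variable, and the remaining work is purely a matter of estimating $\mathcal{M}(v_0)$ in two different spaces.

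Finally, I would take the $L^{p(\cdot)}$-norm in $x$ and invoke Lemma \ref{le2.7} (using $p(\cdot)\in\mathcal{P}^{\log}(\mathbb{R}^n)$ with $1<p^-\leq p^+<+\infty$) to obtain
\[
\|G^{\alpha}_t * v_0\|_{L^{p(\cdot)}_x(L^{\infty}_t)} \leq C_0\,\|\mathcal{M}(v_0)\|_{L^{p(\cdot)}} \leq C\,\|v_0\|_{L^{p(\cdot)}},
\]
and analogously, taking the $L^{\frac{n}{\alpha-1}}$-norm in $x$ and using the classical strong $(p,p)$-bound for the maximal function (valid since $\frac{n}{\alpha-1}>1$ by the hypothesis),
\[
\|G^{\alpha}_t * v_0\|_{L^{n/(\alpha-1)}_x(L^{\infty}_t)} \leq C\,\|v_0\|_{L^{n/(\alpha-1)}}.
\]
Combining the two and taking the maximum produces the desired bound by the definition of the $\mathcal{L}^{p(\cdot)}_{n/(\alpha-1)}$-norm.

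There is no real obstacle here, since the entire argument hinges on the familiar fact that the fractional heat semigroup is dominated by the Hardy--Littlewood maximal operator uniformly in time; the only subtlety worth flagging is that the $L^1$-normalization of $G^{\alpha}_t$ must be independent of $t$ so that the $L^{\infty}_t$-norm can be absorbed without loss, and that the $\log$-H\"older regularity of $p(\cdot)$ is exactly what is needed to pass from the pointwise maximal bound to the variable Lebesgue norm via Lemma \ref{le2.7}.
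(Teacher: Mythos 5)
Your proposal is correct and follows essentially the same route as the paper: the pointwise domination $|G^{\alpha}_{t}\ast v_{0}|\leq \|G^{\alpha}_{t}\|_{L^{1}}\,\mathcal{M}(v_{0})(x)$ via Lemma \ref{le2.13} (with the $L^{1}$-norm of the kernel uniform in $t$), followed by taking the supremum in time and applying the boundedness of the maximal operator in $L^{p(\cdot)}$ (Lemma \ref{le2.7}) and in $L^{\frac{n}{\alpha-1}}$. The only cosmetic difference is that the paper normalizes $\|G^{\alpha}_{t}\|_{L^{1}}=1$ while you keep a constant $C_{0}$, which changes nothing.
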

\begin{proof}
Since the fractional heat kernel $G^{\alpha}_{t}(x)$ is a radially decreasing function with respect to the space variable, we can use Lemma \ref{le2.13} to get
\begin{equation*}
   \left|G^{\alpha}_{t}\ast v_{0}\right|\leq \|G_{t}(\cdot)\|_{L^{1}_{x}}\mathcal{M}(v_{0})(x)=\mathcal{M}(v_{0})(x),
\end{equation*}
which directly yields that
\begin{equation}\label{eq3.5}
   \|G^{\alpha}_{t}\ast v_{0}\|_{L^{\infty}_{t}}\leq \mathcal{M}(v_{0})(x).
\end{equation}
Then, by taking the $\mathcal{L}^{p(\cdot)}_{\frac{n}{\alpha-1}}$-norm to both sides of \eqref{eq3.5} with respect to the space variable $x$,  we see that
\begin{equation}\label{eq3.6}
   \|G^{\alpha}_{t}\ast v_{0}\|_{\mathcal{X}_{T}}\leq \|\mathcal{M}(v_{0})\|_{\mathcal{L}^{p(\cdot)}_{\frac{n}{\alpha-1}}}\leq \max\{\|\mathcal{M}(v_{0})\|_{L^{p(\cdot)}_{x}}, \|\mathcal{M}(v_{0})\|_{L^{\frac{n}{\alpha-1}}_{x}}\}.
\end{equation}
Recalling that $p(\cdot)\in\mathcal{P}^{\log}(\mathbb{R}^{n})$ with  $1<p^{-}\leq p^{+}<+\infty$, thus we know from Lemma \ref{le2.7} that the maximal function $\mathcal{M}$ is bounded in the variable Lebesgue space $L^{p(\cdot)}(\mathbb{R}^{n})$, as well as the usual Lebesgue space $L^{\frac{n}{\alpha-1}}(\mathbb{R}^{n})$, thus we obtain
\begin{align*}
   \|G^{\alpha}_{t}\ast v_{0}\|_{\mathcal{X}_{T}}&\leq \max\{\|\mathcal{M}(v_{0})\|_{L^{p(\cdot)}_{x}}, \|\mathcal{M}(v_{0})\|_{L^{\frac{n}{\alpha-1}}_{x}}\}\nonumber\\
      &\leq C\max\{\|v_{0}\|_{L^{p(\cdot)}_{x}}, \|v_{0}\|_{L^{\frac{n}{\alpha-1}}_{x}}\}\nonumber\\
      &=C\|v_{0}\|_{\mathcal{L}^{p(\cdot)}_{\frac{n}{\alpha-1}}}.
\end{align*}
Thus, we complete the proof of Lemma \ref{le3.2}. 
\end{proof}
\smallbreak

\begin{lemma}\label{le3.3}
Let the assumptions of Theorem \ref{th1.3} be in force.  Then
there exists a positive constant $C$ such that
\begin{equation}\label{eq3.7}
   \left\|\int_{0}^{t}G^{\alpha}_{t-s}\ast\mathcal{R}\otimes\mathcal{R}\nabla\cdot\left(v\otimes v-\frac{1}{2}\left|v\right|^{2}I\right)ds\right\|_{\mathcal{X}}\leq
   C \|v\|_{\mathcal{X}}^{2}.
\end{equation}
\end{lemma}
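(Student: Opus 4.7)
The plan is to reduce the time-dependent convolution with the fractional heat kernel to a purely spatial Riesz potential acting on a quadratic functional of $v$, and then to apply the mixed Hardy--Littlewood--Sobolev inequality together with the boundedness of the Riesz transforms. Writing $F(v) := v\otimes v - \tfrac{1}{2}|v|^{2}I$ and moving $\nabla$ onto the kernel via integration by parts, the bilinear term becomes
\begin{equation*}
B(v,v)(t,x) = \int_{0}^{t} \nabla G^{\alpha}_{t-s}\ast \mathcal{R}\otimes\mathcal{R}\, F(v)(s)(x)\,ds.
\end{equation*}

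First I would apply the pointwise bound $|\nabla G^{\alpha}_{t-s}(y)|\le C((t-s)^{1/\alpha}+|y|)^{-n-1}$ from Lemma~\ref{le2.12}, invoke Tonelli's theorem to exchange the order of integration, and majorize $|\mathcal{R}\otimes\mathcal{R} F(v)(s,\cdot)|$ pointwise by its $L^{\infty}_{s}$-envelope $g^{*}(x):=\sup_{s\ge 0}|\mathcal{R}\otimes\mathcal{R}F(v)(s,x)|$. An elementary change of variables yields $\int_{0}^{\infty}(\tau^{1/\alpha}+|y|)^{-n-1}d\tau = C|y|^{\alpha-1-n}$, which is exactly a Riesz kernel of order $\alpha-1$. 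This produces the pointwise, uniform-in-$t$ bound
\begin{equation*}
\sup_{t\ge 0}|B(v,v)(t,x)|\le C\,\mathcal{I}_{\alpha-1}(g^{*})(x).
\end{equation*}
Next I would bound $\|\mathcal{I}_{\alpha-1}(g^{*})\|_{L^{p(\cdot)}}$ and $\|\mathcal{I}_{\alpha-1}(g^{*})\|_{L^{n/(\alpha-1)}}$ separately. The scalar Hardy--Littlewood--Sobolev inequality, applicable precisely because $n/(2(\alpha-1))>1$ is assumed, gives $\|\mathcal{I}_{\alpha-1}(g^{*})\|_{L^{n/(\alpha-1)}}\le C\|g^{*}\|_{L^{n/(2(\alpha-1))}}$; for the variable-exponent side, the mixed inequality of Lemma~\ref{le2.11} with $\beta=\alpha-1$ and $r=n/(2(\alpha-1))$ yields $\|\mathcal{I}_{\alpha-1}(g^{*})\|_{L^{p(\cdot)}}\le C\|g^{*}\|_{\mathcal{L}^{p(\cdot)/2}_{n/(2(\alpha-1))}}$. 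To control $g^{*}$ I would combine the boundedness of the Riesz transforms in the variable and constant Lebesgue spaces (Lemma~\ref{le2.7} and \eqref{eq2.11}), the pointwise bound $|F(v)|\le C|v|^{2}$, and the H\"{o}lder inequalities \eqref{eq2.1} and \eqref{eq2.10}, arriving at $\|g^{*}\|\le C\|\sup_{s}|v|\|^{2}\le C\|v\|_{\mathcal{X}}^{2}$ in each of the two requisite norms.

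The main obstacle is the final reduction, where the supremum in $s$ has to be transferred through the Riesz transforms: Lemma~\ref{le2.7} and \eqref{eq2.11} are stated for fixed $s$, so either a vector-valued Calder\'{o}n--Zygmund argument (treating $\mathcal{R}\otimes\mathcal{R}$ as an $L^{\infty}_{s}$-valued singular integral) or, alternatively, a pointwise reduction via Lemma~\ref{le2.13} applied to the radially decreasing envelope of $|\nabla G^{\alpha}_{t-s}|$ before integrating in $s$ appears unavoidable. A secondary matching issue is that the exponent $p(\cdot)/2$ produced above requires $p^{-}>2$ for Lemma~\ref{le2.11} to apply verbatim; if this is undesirable, the variable Hardy--Littlewood--Sobolev inequality of Lemma~\ref{le2.9} with $1/p_{1}(\cdot)=1/p(\cdot)+(\alpha-1)/n$ can be used instead, after splitting $|v|^{2}=|v|\cdot|v|$ by H\"{o}lder with one factor in $L^{p(\cdot)}$ and the other in $L^{n/(\alpha-1)}$.
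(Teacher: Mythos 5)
Your proposal follows essentially the same route as the paper: the Duhamel term is reduced, via the pointwise bound of Lemma~\ref{le2.12} and the computation $\int_{0}^{\infty}(\tau^{1/\alpha}+|y|)^{-n-1}\,d\tau=C|y|^{-(n+1-\alpha)}$, to the Riesz potential $\mathcal{I}_{\alpha-1}$ applied to $\|v\|_{L^{\infty}_{t}}^{2}$, and the two components of the $\mathcal{X}$-norm are then handled by the mixed inequality of Lemma~\ref{le2.11} (with $\beta=\alpha-1$, $r=\frac{n}{2(\alpha-1)}$ and exponent $p(\cdot)/2$, whose target is exactly $L^{p(\cdot)}$) and by the scalar Hardy--Littlewood--Sobolev inequality, followed by H\"older. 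The two ``obstacles'' you flag are genuine, but they are not artifacts of your write-up: both are present, and left implicit, in the paper's own argument. First, the paper commutes $\mathcal{R}\otimes\mathcal{R}$ in front of the time integral in \eqref{eq3.8} and then passes the uniform-in-$t$ pointwise bound \eqref{eq3.11} through $\mathcal{R}\otimes\mathcal{R}$ in \eqref{eq3.12}--\eqref{eq3.13}; since the Riesz transforms are not positive operators, this step requires precisely the $L^{\infty}_{t}$-valued boundedness of $\mathcal{R}_{j}$ on $L^{p(\cdot)}_{x}$ that you identify, and the paper supplies no vector-valued argument for it. Second, applying Lemma~\ref{le2.11} with exponent $p(\cdot)/2$ needs $(p/2)^{-}>1$, i.e. $p^{-}>2$, which is not among the stated hypotheses of Theorem~\ref{th1.3}; your repair via Lemma~\ref{le2.9} after splitting $|v|^{2}=|v|\cdot|v|$ between $L^{p(\cdot)}$ and $L^{n/(\alpha-1)}$ is sensible and less restrictive, though it still requires $\frac{1}{p(\cdot)}+\frac{\alpha-1}{n}<1$, so some lower bound on $p^{-}$ remains unavoidable. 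In short, your outline reproduces the paper's proof, and the points you single out as needing care are exactly the points where the published proof is terse or incomplete.
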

\begin{proof}
We begin by noticing that, due to the properties of Riesz transforms $\mathcal{R}$, for any $0<t<+\infty$, we can rewrite the second term on the right-hand side of
\eqref{eq3.3} as
\begin{align}\label{eq3.8}
   \int_{0}^{t}G^{\alpha}_{t-s}\ast\mathcal{R}\otimes\mathcal{R}\nabla\cdot\left(v\otimes v-\frac{1}{2}\left|v\right|^{2}I\right)ds=\mathcal{R}\otimes\mathcal{R}\int_{0}^{t}G^{\alpha}_{t-s}\ast\nabla\cdot\left(v\otimes v-\frac{1}{2}\left|v\right|^{2}I\right)ds.
\end{align}
By the Minkowski's integral inequality and the decay properties of the fractional heat kernel, we obtain
\begin{align*}
  \left|\int_{0}^{t}G^{\alpha}_{t-s}\ast\nabla\cdot\left(v\otimes v-\frac{1}{2}\left|v\right|^{2}I\right)ds\right|&\leq \int_{0}^{t}\int_{\mathbb{R}^{n}}\left|\nabla G^{\alpha}_{t-s}(x-y)\right| \left|v(s,x)\right|^{2}dyds\nonumber\\
   &\leq \int_{\mathbb{R}^{n}}\int_{0}^{t} |\nabla G^{\alpha}_{t-s}(x-y)|\left|v(s,x)\right|^{2}dsdy,
\end{align*}
which considering the $L^{\infty}$-norm with respect to the time variable and using Lemma \ref{le2.12}, one gets
\begin{align}\label{eq3.9}
    \left|\int_{0}^{t}G^{\alpha}_{t-s}\ast\nabla\cdot\left(v\otimes v-\frac{1}{2}\left|v\right|^{2}I\right)ds\right|\leq\int_{\mathbb{R}^{n}}\int_{0}^{t} \frac{1}{(|t-s|^{\frac{1}{\alpha}}+|x-y|)^{n+1}}ds\|v(\cdot,y)\|_{L^{\infty}_{t}}^{2}dy.
\end{align}
It is easy to calculate that
\begin{align}\label{eq3.10}
   \int_{0}^{t} \frac{1}{(|t-s|^{\frac{1}{\alpha}}+|x-y|)^{n+1}}ds&\leq \int_{0}^{+\infty} \frac{1}{(s^{\frac{1}{\alpha}}+|x-y|)^{n+1}}ds\nonumber\\
   &=\int_{0}^{+\infty} \frac{|x-y|^{\alpha}}{((|x-y|^{\alpha}\tau)^{\frac{1}{\alpha}}+|x-y|)^{n+1}}d\tau\nonumber\\
   &=\frac{1}{|x-y|^{n+1-\alpha}}\int_{0}^{+\infty} \frac{1}{(1+\tau^{\frac{1}{\alpha}})^{n+1}}d\tau\nonumber\\
   &\leq \frac{C}{|x-y|^{n+1-\alpha}}.
\end{align}
Thus taking \eqref{eq3.10} into \eqref{eq3.9} and using the definition of Riesz potential, we obtain
\begin{align}\label{eq3.11}
   \left|\int_{0}^{t}G^{\alpha}_{t-s}\ast\nabla\cdot\left(v\otimes v-\frac{1}{2}\left|v\right|^{2}I\right)ds\right|&\leq C\int_{\mathbb{R}^{n}}\frac{1}{|x-y|^{n+1-\alpha}}\|v(\cdot,y)\|_{L^{\infty}_{t}}^{2}dy\nonumber\\
   &=C\mathcal{I}_{\alpha-1}\left(\|v\|_{L^{\infty}_{t}}^{2}\right)(x).
\end{align}
Back to \eqref{eq3.8}, we know from
\eqref{eq3.11} that
\begin{align}\label{eq3.12}
 \mathcal{R}\otimes\mathcal{R}\int_{0}^{t}G^{\alpha}_{t-s}\ast\nabla\cdot\left(v\otimes v-\frac{1}{2}\left|v\right|^{2}I\right)ds\leq C\mathcal{R}\otimes\mathcal{R}\left(\mathcal{I}_{\alpha-1}\left(\|v\|_{L^{\infty}_{t}}^{2}\right)(x)\right).
\end{align}
By taking $\mathcal{L}^{p(\cdot)}_{\frac{n}{\alpha-1}}$-norm to both sides of \eqref{eq3.12} with respect to the space variable $x$, then using  Lemma \ref{le2.11}, the fact that Riesz transforms are bounded  in the spaces $\mathcal{L}^{p(\cdot)}_{\frac{n}{\alpha-1}}(\mathbb{R}^{n})$, \eqref{eq2.10} and \eqref{eq2.11}, we obtain
\begin{align}\label{eq3.13}
  \left\| \mathcal{R}\otimes\mathcal{R}\int_{0}^{t}G^{\alpha}_{t-s}\ast\nabla\cdot\left(v\otimes v-\frac{1}{2}\left|v\right|^{2}I\right)ds\right\|_{L^{p(\cdot)}_{x}(L^{\infty}_{t})}&\leq C\left\|\mathcal{I}_{\alpha-1}\left(\|v\|_{L^{\infty}_{t}}^{2}\right)\right\|_{L^{p(\cdot)}_{x}}\nonumber\\
  &\leq C\|\|v\|_{L^{\infty}_{t}}^{2}\|_{\mathcal{L}^{\frac{p(\cdot)}{2}}_{\frac{n}{2(\alpha-1)}}}\nonumber\\
  &\leq C\|v\|_{\mathcal{L}^{p(\cdot)}_{\frac{n}{\alpha-1}}(L^{\infty}_{t})}^{2};
\end{align}
\begin{align}\label{eq3.14}
  \left\| \mathcal{R}\otimes\mathcal{R}\int_{0}^{t}G^{\alpha}_{t-s}\ast\nabla\cdot\left(v\otimes v-\frac{1}{2}\left|v\right|^{2}I\right)ds\right\|_{L^{\frac{n}{\alpha-1}}_{x}(L^{\infty}_{t})}&\leq C\left\|\mathcal{I}_{\alpha-1}\left(\|v\|_{L^{\infty}_{t}}^{2}\right)\right\|_{L^{\frac{n}{\alpha-1}}_{x}}\nonumber\\
  &\leq C\|\|v\|_{L^{\infty}_{t}}^{2}\|_{L^{\frac{n}{2(\alpha-1)}}}\nonumber\\
  &\leq C\|v\|_{L^{\frac{n}{\alpha-1}}(L^{\infty}_{t})}^{2}.
\end{align}
Putting the above estimates \eqref{eq3.13} and \eqref{eq3.14} together, we get \eqref{eq3.7}. Thus, we complete the proof of Lemma \ref{le3.3}.
\end{proof}
\smallbreak

Based on the desired linear and nonlinear estimates obtained in Lemmas \ref{le3.2} and  \ref{le3.3}, we know that there exist two positive
constants $C_{1}$ and $C_{2}$ such that
\begin{align}\label{eq3.15}
  \|v_{0}\|_{\mathcal{X}}\leq
 C_{1}\|v_{0}\|_{\mathcal{L}^{p(\cdot)}_{\frac{n}{\alpha-1}}}+C_{2}\|v\|_{\mathcal{X}}^2.
\end{align}
Thus if the initial data $v_{0}$ satisfies the condition
$$
\|v_{0}\|_{\mathcal{L}^{p(\cdot)}_{\frac{n}{\alpha-1}}}\leq \frac{1}{4C_{1}C_{2}},
$$
we can apply Proposition \ref{pro3.1} to get global existence of solution $v\in \mathcal{X}$ to the system \eqref{eq3.2}. This yields a unique global solution $u$ of the system \eqref{eq1.1}  such that $\nabla(-\Delta)^{-1}u\in\mathcal{X}$. Thus, we complete the proof of Theorem \ref{th1.3}.

\section{Proof of Theorem \ref{th1.4}}

In this section we present the proof of Theorem \ref{th1.4}. In this case, under the assumptions of Theorem \ref{th1.4}, we choose the solution space as
\begin{equation*}
   \mathcal{Y}_{T}:=L^{q(\cdot)}(0,T; L^{p}(\mathbb{R}^n)),
\end{equation*}
where $T>0$ is a constant to be determined later.  We endow the norm of the  space $\mathcal{Y}_{T}$ with the Luxemburg norm as
\begin{equation*}
   \|f\|_{\mathcal{Y}_{T}}=\inf\left\{\lambda>0: \int_{0}^{T}\left|\frac{\|f(t,\cdot)\|_{L^p}}{\lambda}\right|^{q(t)}dt\leq1\right\}.
\end{equation*}
\smallbreak

Similarly, we need to establish the linear and nonlinear estimates of the integral equation \eqref{eq3.3} in the space $\mathcal{Y}_{T}$.
\begin{lemma}\label{le4.1}
Let the assumptions of Theorem \ref{th1.4} be in force.  Then for any $\theta_{0}\in L^{\bar{p}(\cdot)}(\mathbb{R}^{n})$,
there exists a positive constant $C$ such that
\begin{equation}\label{eq4.1}
   \|G^{\alpha}_{t}\ast v_{0}\|_{\mathcal{Y}_{T}}\leq
   C\max\{T^{\frac{1}{q^{-}}}, T^{\frac{1}{q^{+}}}\}\|v_{0}\|_{L^{\bar{p}(\cdot)}_{x}}.
\end{equation}
\end{lemma}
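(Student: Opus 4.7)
\textbf{Proof proposal for Lemma \ref{le4.1}.}

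The plan is to reduce the variable time integrability to a standard computation in three moves: first eliminate the variable spatial exponent via the embedding, then exploit the boundedness of the fractional heat semigroup on $L^{p}(\mathbb{R}^{n})$, and finally compute the Luxemburg norm of a constant function on $(0,T)$.

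First, I would invoke Lemma \ref{le2.5}: since $\bar{p}(\cdot)\in\mathcal{P}^{\text{emb}}_{p}(\mathbb{R}^{n})$, we have $L^{\bar{p}(\cdot)}(\mathbb{R}^{n})\hookrightarrow L^{p}(\mathbb{R}^{n})$ with $\|v_{0}\|_{L^{p}}\leq C\|v_{0}\|_{L^{\bar{p}(\cdot)}}$. Next, taking $p=q$ in Lemma \ref{le2.14} (or, equivalently, applying Young's inequality together with $\|G^{\alpha}_{t}\|_{L^{1}}=1$), I obtain the uniform-in-$t$ bound
\begin{equation*}
\|G^{\alpha}_{t}\ast v_{0}\|_{L^{p}}\leq C\|v_{0}\|_{L^{p}}\leq C\|v_{0}\|_{L^{\bar{p}(\cdot)}}\qquad\text{for all }t>0.
\end{equation*}
Thus the function $t\mapsto\|G^{\alpha}_{t}\ast v_{0}\|_{L^{p}}$ is dominated pointwise on $(0,T)$ by the constant $C\|v_{0}\|_{L^{\bar{p}(\cdot)}}$.

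The remaining step is to estimate the Luxemburg norm of a constant on $(0,T)$ under the variable exponent $q(\cdot)\in\mathcal{P}^{\log}(0,+\infty)$ with $2<q^{-}\leq q^{+}<+\infty$. Writing $M:=C\|v_{0}\|_{L^{\bar{p}(\cdot)}}$ and using the definition of $\|\cdot\|_{\mathcal{Y}_{T}}$, I need to identify $\lambda>0$ for which
\begin{equation*}
\int_{0}^{T}\Bigl(\tfrac{M}{\lambda}\Bigr)^{q(t)}\,dt\leq 1.
\end{equation*}
Splitting into the cases $M/\lambda\leq 1$ and $M/\lambda\geq 1$, the integrand is dominated by $(M/\lambda)^{q^{-}}$ in one case and by $(M/\lambda)^{q^{+}}$ in the other, so the integral is controlled by $T\max\{(M/\lambda)^{q^{-}},(M/\lambda)^{q^{+}}\}$. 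Requiring this last quantity to be at most $1$ is equivalent to $\lambda\geq M\max\{T^{1/q^{-}},T^{1/q^{+}}\}$, which yields
\begin{equation*}
\|G^{\alpha}_{t}\ast v_{0}\|_{\mathcal{Y}_{T}}\leq M\max\{T^{1/q^{-}},T^{1/q^{+}}\}\leq C\max\{T^{1/q^{-}},T^{1/q^{+}}\}\|v_{0}\|_{L^{\bar{p}(\cdot)}_{x}},
\end{equation*}
as required.

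I do not expect any serious obstacle here: the estimate is linear and completely decouples time from space. The only small technical point is the careful case split on $M/\lambda\lessgtr 1$ when bounding the modular $\rho_{q(\cdot)}$; this is what forces the appearance of both $T^{1/q^{-}}$ and $T^{1/q^{+}}$ in the final constant, accommodating the two regimes $T\leq 1$ and $T\geq 1$ respectively. No use of the log-H\"older regularity or of the constraint $\alpha/q(\cdot)+n/p<1$ is needed at this stage; these hypotheses will enter only in the nonlinear estimate of the Duhamel term.
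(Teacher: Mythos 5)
Your proposal is correct and follows essentially the same route as the paper: boundedness of $G^{\alpha}_{t}\ast$ on $L^{p}(\mathbb{R}^{n})$ uniformly in $t$, the embedding $L^{\bar{p}(\cdot)}(\mathbb{R}^{n})\hookrightarrow L^{p}(\mathbb{R}^{n})$ from Lemma \ref{le2.5}, and the bound $\|1\|_{L^{q(\cdot)}(0,T)}\leq C\max\{T^{1/q^{-}},T^{1/q^{+}}\}$, which the paper simply quotes as \eqref{eq4.3} while you verify it directly via the modular case split. The only differences are cosmetic (order of the embedding step and your explicit proof of \eqref{eq4.3}), so the argument matches the paper's.
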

\begin{proof}
We infer from Lemma \ref{le2.14} that
\begin{equation}\label{eq4.2}
   \|G^{\alpha}_{t}\ast v_{0}\|_{L^{p}_{x}}\leq
   \|G^{\alpha}_{t}(x)\|_{L^{1}_{x}}\|v_{0}\|_{L^{p}_{x}}=\|v_{0}\|_{L^{p}_{x}}.
\end{equation}
To continue, we need to use a simple result in the context of variable Lebesgue spaces:
\begin{equation}\label{eq4.3}
   \|1\|_{L^{q(\cdot)}_{t}}\leq
   2\max\{T^{\frac{1}{q^{-}}}, T^{\frac{1}{q^{+}}}\}.
\end{equation}
Thus, by taking the $L^{q(\cdot)}$-norm to both sides of \eqref{eq4.2} with respect to the time variable $t$, and using the H\"{o}lder's inequality \eqref{eq2.1} yields that
\begin{equation}\label{eq4.4}
   \|G^{\alpha}_{t}\ast v_{0}\|_{L^{q(\cdot)}_{t}(L^{p}_{x})}\leq C\|1\|_{L^{q(\cdot)}_{t}}
   \|v_{0}\|_{L^{p}_{x}}\leq C\max\{T^{\frac{1}{q^{-}}}, T^{\frac{1}{q^{+}}}\}\|v_{0}\|_{L^{p}_{x}},
\end{equation}
By Lemma \ref{le2.5}, we finally conclude that
\begin{equation}\label{eq4.5}
   \|G^{\alpha}_{t}\ast v_{0}\|_{L^{q(\cdot)}_{t}(L^{p}_{x})}\leq  C\max\{T^{\frac{1}{q^{-}}}, T^{\frac{1}{q^{+}}}\}\|v_{0}\|_{L^{\bar{p}(\cdot)}_{x}}.
\end{equation}
Thus, we complete the proof of Lemma \ref{le4.1}.
\end{proof}
\smallbreak

\begin{lemma}\label{le4.2}
Let the assumptions of Theorem \ref{th1.4} be in force. Then for any $T>0$,
there exists a positive constant $C$ such that
\begin{equation}\label{eq4.6}
   \left\|\int_{0}^{t}G^{\alpha}_{t-s}\ast\mathcal{R}\otimes\mathcal{R}\nabla\cdot\left(v\otimes v-\frac{1}{2}\left|v\right|^{2}I\right)ds\right\|_{\mathcal{Y}_{T}}\leq
    C (1+T)\|\theta\|_{\mathcal{Y}_{T}}^{2}.
\end{equation}
\end{lemma}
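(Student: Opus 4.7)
The plan is to control the bilinear term $B(v,v)$ in $\mathcal{Y}_{T}$ via a three-step reduction: first peel off the Riesz transforms, then convert the space convolution with $\nabla G^{\alpha}_{t-s}$ into a scalar fractional integral in time via the smoothing estimate \eqref{eq2.16}, and finally dominate the resulting time integral inside the variable Lebesgue scale $L^{q(\cdot)}(0,T)$.

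For the first two steps, Minkowski's integral inequality combined with the $L^{p}$-boundedness of $\mathcal{R}\otimes\mathcal{R}$ (consequence of Lemma \ref{le2.7} applied twice, with $1<p<\infty$) gives
\begin{equation*}
\|B(v,v)(t)\|_{L^{p}_{x}}\leq C\int_{0}^{t}\left\|\nabla G^{\alpha}_{t-s}\ast\left(v\otimes v-\tfrac{1}{2}\left|v\right|^{2}I\right)(s)\right\|_{L^{p}_{x}}ds.
\end{equation*}
Applying estimate \eqref{eq2.16} with $\nu=1$ between the spatial exponents $p/2$ and $p$, together with the pointwise bound $\left|v\otimes v-\tfrac{1}{2}\left|v\right|^{2}I\right|\leq C|v|^{2}$, this yields the scalar pointwise-in-$t$ estimate
\begin{equation*}
\|B(v,v)(t)\|_{L^{p}_{x}}\leq C\int_{0}^{t}(t-s)^{-\gamma}\|v(s)\|^{2}_{L^{p}_{x}}ds,\qquad \gamma:=\frac{1}{\alpha}+\frac{n}{\alpha p}.
\end{equation*}

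For the third step I would handle the time integral by passing temporarily to a constant exponent. Since $q^{-}>2$, H\"{o}lder's inequality in time with the conjugate pair $\left(\frac{q^{-}}{2},\frac{q^{-}}{q^{-}-2}\right)$---valid thanks to the scaling assumption $\frac{\alpha}{q(\cdot)}+\frac{n}{p}<1$, which keeps the singular kernel $(t-s)^{-\gamma}$ integrable to the required power on $(0,T)$---gives
\begin{equation*}
\int_{0}^{t}(t-s)^{-\gamma}\|v(s)\|^{2}_{L^{p}_{x}}ds \leq Ct^{\sigma}\|v\|_{L^{q^{-}}(0,T;L^{p})}^{2}
\end{equation*}
for a suitable $\sigma>0$. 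Then the embedding $L^{q(\cdot)}(0,T)\hookrightarrow L^{q^{-}}(0,T)$ supplied by Lemma \ref{le2.3} on the bounded interval $(0,T)$, with embedding constant at most $1+T$, converts this into
\begin{equation*}
\int_{0}^{t}(t-s)^{-\gamma}\|v(s)\|^{2}_{L^{p}_{x}}ds \leq Ct^{\sigma}(1+T)^{2}\|v\|_{\mathcal{Y}_{T}}^{2}.
\end{equation*}
Finally, taking the $L^{q(\cdot)}_{t}(0,T)$-norm of both sides, invoking the estimate \eqref{eq4.3} (applied to $t^{\sigma}$ in place of $1$) to bound $\|t^{\sigma}\|_{L^{q(\cdot)}(0,T)}$ by $C\max\{T^{\sigma+1/q^{-}},T^{\sigma+1/q^{+}}\}$, and absorbing all resulting polynomial powers of $T$ into a single factor $1+T$ by adjusting the constant, I arrive at the desired bound \eqref{eq4.6}.

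I expect the main obstacle to be the third step. Since variable Lebesgue spaces on $(0,T)$ are not translation invariant, a direct Young-type inequality for the time convolution against $(t-s)^{-\gamma}$ is unavailable, so one must route through the constant exponent $q^{-}$ via Lemma \ref{le2.3}. The scaling relation $\frac{\alpha}{q(\cdot)}+\frac{n}{p}<1$ together with the hypotheses $q^{-}>2$ and $1<\alpha\leq 2$ is calibrated precisely to keep the kernel integrable to the Lebesgue power demanded by H\"{o}lder and to make the chain of embeddings close; verifying this, and then tracking the various $T$-dependent constants produced by H\"{o}lder, by the embedding, and by \eqref{eq4.3} so that they condense cleanly into the single factor $1+T$, is the delicate bookkeeping that requires care.
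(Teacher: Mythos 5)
Your first two steps (peeling off $\mathcal{R}\otimes\mathcal{R}$ by $L^{p}$-boundedness and using \eqref{eq2.16} with $\nu=1$ from $L^{p/2}$ to $L^{p}$ to reach $\|B(v,v)(t)\|_{L^{p}_{x}}\leq C\int_{0}^{t}(t-s)^{-\gamma}\|v(s)\|_{L^{p}_{x}}^{2}ds$ with $\gamma=\frac{1}{\alpha}+\frac{n}{\alpha p}$) coincide with the paper's. The gap is in your third step. H\"older in time with the pair $\left(\frac{q^{-}}{2},\frac{q^{-}}{q^{-}-2}\right)$ requires $\int_{0}^{t}(t-s)^{-\gamma\frac{q^{-}}{q^{-}-2}}ds<\infty$, i.e. $\gamma<1-\frac{2}{q^{-}}$, which after clearing denominators reads $\frac{2\alpha}{q^{-}}+\frac{n}{p}<\alpha-1$. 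This is \emph{not} a consequence of the hypotheses of Theorem \ref{th1.4} ($q^{-}>2$, $p>\frac{n}{\alpha}$, $\frac{\alpha}{q(\cdot)}+\frac{n}{p}<1$), nor even of the stronger condition $\frac{\alpha}{q(\cdot)}+\frac{n}{p}<\alpha-1$ that the paper's own proof invokes: take for instance $\alpha=2$, $\frac{n}{p}=\frac{1}{2}$, $q^{-}=5$, where $\frac{\alpha}{q^{-}}+\frac{n}{p}=0.9<1$ but $\frac{2\alpha}{q^{-}}+\frac{n}{p}=1.3>1$, so your kernel is not integrable to the required power and the H\"older step fails. In other words, your assertion that the scaling assumption ``keeps the singular kernel integrable to the required power'' is false in general: by testing the kernel directly against an $L^{\frac{q^{-}}{q^{-}-2}}$ norm you spend $\frac{2}{q^{-}}$ of integrability, twice what the admissible range allows.

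The paper avoids exactly this loss by never asking the kernel to be power-integrable: it dualizes the $L^{q(\cdot)}_{t}$-norm via Lemma \ref{le2.2}, recognizes $\int\frac{|\psi(t)|}{|t-s|^{\gamma}}dt$ as the one-dimensional Riesz potential $\mathcal{I}_{\beta}(|\psi|)$ with $\beta=1-\frac{1}{\alpha}-\frac{n}{\alpha p}$, and uses its variable-exponent boundedness (Lemma \ref{le2.9}) together with H\"older with $\frac{2}{q(\cdot)}+\frac{1}{\widetilde q(\cdot)}=1$ and the bounded-interval embedding Lemma \ref{le2.3}. This Hardy--Littlewood--Sobolev-type route only needs $\beta\geq\frac{1}{q(\cdot)}$, i.e. $\frac{\alpha}{q(\cdot)}+\frac{n}{p}\leq\alpha-1$, because the quadratic factor $\|v\|_{L^{p}_{x}}^{2}$ is measured in $L^{q(\cdot)/2}_{t}$ while the kernel's singularity is converted into a gain of integrability for the dual function $\psi$. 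If you want to bypass duality you would at least need an HLS inequality in time rather than plain H\"older. A secondary, more cosmetic point: your final absorption of $T^{\sigma+1/q^{\pm}}(1+T)^{2}$ into a single factor $1+T$ with a $T$-independent constant is not valid for large $T$; it is harmless for the local-in-time application, but as stated it does not deliver \eqref{eq4.6} in the form claimed.
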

\begin{proof}
For any $0<t\leq T$, taking $L^{p}$-norm to the second term on the right-hand side of \eqref{eq3.3} with respect to the space variable $x$,  and using Lemma \ref{le2.14} and the fact that the Riesz transforms are bounded in the space $L^{p}$, we obtain
\begin{align}\label{eq4.7}
     \left\|\int_{0}^{t}G^{\alpha}_{t-s}\ast\mathcal{R}\otimes\mathcal{R}\nabla\cdot\left(v\otimes v-\frac{1}{2}\left|v\right|^{2}I\right)ds\right\|_{L^{p}_{x}}&\leq \int_{0}^{t}\|\nabla G^{\alpha}_{t-s}\ast(v\otimes v+\frac{1}{2}|v|^{2}I)(s,\cdot)\|_{L^{p}_{x}}ds\nonumber\\
   &\leq \int_{0}^{t}\frac{1}{(t-s)^{\frac{1}{\alpha}+\frac{n}{\alpha p}}}\|v(s,\cdot)\|_{L^{p}_{x}}^{2}ds.
\end{align}
Then taking $L^{q(\cdot)}$-norm on the right-hand side of \eqref{eq4.7} with respect to the time variable $t$ and using the norm conjugate formula \eqref{eq2.3} by $\frac{1}{q(\cdot)}+\frac{1}{q'(\cdot)}=1$, we see that
\begin{align}\label{eq4.8}
    \left\|\int_{0}^{t}\frac{1}{(t-s)^{\frac{1}{\alpha}+\frac{2}{\alpha p}}}\|v(s,\cdot)\|_{L^{p}_{x}}^{2}ds\right\|_{L^{q(\cdot)}_{t}}
   &\leq 2\sup_{\|\psi\|_{L^{q'(\cdot)}_{t}}\leq 1}\int_{0}^{T}\int_{0}^{t}\frac{|\psi(t)|}{|t-s|^{\frac{1}{\alpha}+\frac{n}{\alpha p}}}\|v(s,\cdot)\|_{L^{p}_{x}}^{2}dsdt\nonumber\\
   &=2\sup_{\|\psi\|_{L^{q'(\cdot)}_{t}}\leq 1}\int_{0}^{T}\int_{0}^{T}\frac{\chi_{\{0<s<t\}}|\psi(t)|}{|t-s|^{\frac{1}{\alpha}+\frac{n}{\alpha p}}}dt\|v(s,\cdot)\|_{L^{p}_{x}}^{2}ds.
\end{align}
In order to use the 1D Riesz potential formula \eqref{eq2.8}, we extend the function $\psi(t)$ by zero on $\mathbb{R}\setminus[0,T]$, and the right-hand side of \eqref{eq4.8} can be represented as
\begin{align}\label{eq4.9}
   &\sup_{\|\psi\|_{L^{q'(\cdot)}_{t}}\leq 1}\int_{0}^{T}\int_{0}^{T}\frac{\chi_{\{0<s<t\}}|\psi(t)|}{|t-s|^{\frac{1}{\alpha}+\frac{n}{\alpha p}}}dt\|v(s,\cdot)\|_{L^{p}_{x}}^{2}ds\nonumber\\
   &=\sup_{\|\psi\|_{L^{q'(\cdot)}_{t}}\leq 1}\int_{0}^{T}\left(\int_{-\infty}^{+\infty}\frac{|\psi(t)|}{|t-s|^{\frac{1}{\alpha}+\frac{n}{\alpha p}}}dt\right)\|v(s,\cdot)\|_{L^{p}_{x}}^{2}ds\nonumber\\
   &=\sup_{\|\psi\|_{L^{q'(\cdot)}_{t}}\leq 1}\int_{0}^{T}\mathcal{I}_{\beta}(|\psi|)\|v(s,\cdot)\|_{L^{p}_{x}}^{2}ds,
\end{align}
where $\beta=1-\frac{1}{\alpha}-\frac{n}{\alpha p}$. Furthermore, for the right-hand side of \eqref{eq4.9},
by using H\"{o}lder's inequality \eqref{eq2.1} with $\frac{2}{q(x)}+\frac{1}{\widetilde{q}(x)}=1$ yields that
\begin{align}\label{eq4.10}
   \sup_{\|\psi\|_{L^{q'(\cdot)}_{t}}\leq 1}\int_{0}^{T}\mathcal{I}_{\beta}(|\psi|)\|v\|_{L^{p}_{x}}^{2}ds&\leq C\sup_{\|\psi\|_{L^{q'(\cdot)}_{t}}\leq 1}\|\mathcal{I}_{\beta}(|\psi|)\|_{L^{\widetilde{q}(\cdot)}_{t}}\|v\|_{L^{q(\cdot)}_{t}(L^{p}_{x})}^{2}\nonumber\\
   &\leq C\sup_{\|\psi\|_{L^{q'(\cdot)}_{t}}\leq 1}\|\psi\|_{L^{r(\cdot)}_{t}}\|v\|_{L^{q(\cdot)}_{t}(L^{p}_{x})}^{2},
\end{align}
where the above indices satisfy the relationship
$$
\frac{1}{\widetilde{q}(\cdot)}=\frac{1}{r(\cdot)}-(1-\frac{1}{\alpha}-\frac{n}{\alpha p}).
$$
 Since $\frac{1}{\widetilde{q}(x)}=1-\frac{2}{q(x)}$ and $\frac{1}{q'(x)}=1-\frac{1}{q(x)}$, we can deduce that $r(\cdot)<q'(\cdot)$ under the condition $\frac{\alpha}{q(\cdot)}+\frac{n}{p}<\alpha-1$. By using Lemma \ref{le2.3} with $r(\cdot)<q'(\cdot)$ and $\Omega=[0,T]$, we deduce from \eqref{eq4.10} that
\begin{align}\label{eq4.11}
   \sup_{\|\psi\|_{L^{q'(\cdot)}_{t}}\leq 1}\|\psi\|_{L^{r(\cdot)}_{t}}\|v\|_{L^{q(\cdot)}_{t}(L^{p}_{x})}^{2}&\leq \sup_{\|\psi\|_{L^{q'(\cdot)}_{t}}\leq 1}\|\psi\|_{L^{q'(\cdot)}_{t}}\|v\|_{L^{q(\cdot)}_{t}(L^{p}_{x})}^{2}\nonumber\\
   &\leq C(1+T)\|v\|_{L^{q(\cdot)}_{t}(L^{p}_{x})}^{2}.
\end{align}
Finally, taking all above estimates \eqref{eq4.8}--\eqref{eq4.11} into \eqref{eq4.7}, we get \eqref{eq4.6}. Thus, we complete the proof of Lemma \ref{le4.2}.
\end{proof}
\smallbreak

Based on the desired linear and nonlinear estimates obtained in Lemmas \ref{le4.1} and \ref{le4.2}, we know that,  for any $0<T<\infty$,  there exist two positive
constants $C_{1}$ and $C_{2}$ such that
\begin{align}\label{eq4.12}
  \|v\|_{\mathcal{Y}_{T}}\leq
 C_{1}\max\{T^{\frac{1}{q^{-}}}, T^{\frac{1}{q^{+}}}\}\|v_{0}\|_{L^{\bar{p}(\cdot)}_{x}}
  +C_{2}  (1+T)\|v\|_{\mathcal{Y}_{T}}^{2}.
\end{align}
Hence, if we choose $T$ small enough such that
\begin{align*}
\|v_{0}\|_{L^{\bar{p}(\cdot)}_{x}}\leq \frac{1}{4C_{1}C_{2} (1+T)\max\{T^{\frac{1}{q^{-}}}, T^{\frac{1}{q^{+}}}\}},
\end{align*}
we know from Proposition \ref{pro3.1} that the equation \eqref{eq3.2} admits a unique solution $v\in \mathcal{Y}_{T}$, which yields that there exists a unique solution $u$ of the system \eqref{eq1.1} such that $\nabla(-\Delta)^{-1}u\in \mathcal{Y}_{T}$. Thus, we complete the proof of Theorem \ref{th1.4}.
\bigskip

%%%%%%%%%%%%%%%%%%%%%%%%%%%%%%%%%%%%%%%%%%%%%%%%%%%%%%
\noindent \textbf{Acknowledgements.}
The authors declared that they have no conflict of interest. G. Vergara-Hermosilla was supported by the ANID postdoctoral program BCH 2022 (no. 74220003), also he thanks Pierre-Gilles Lemari\'{e}-Rieusset and Diego Chamorro for their helpful comments and advises.
J. Zhao was partially supported by the National Natural Science Foundation of China (no. 12361034) and
 the Natural Science Foundation of Shaanxi Province (no. 2022JM-034).

%%%%%%%%%%%%%%%%%%%%%%%%%%%%%%%%%%%%%%%%%%%%%%%%%%%	


\begin{thebibliography}{100}
\setlength{\itemsep}{-2mm}
\bibitem{BCGK04} P. Biler, M. Cannone, I.A. Guerra, G. Karch, Global regular and singular solutions for a model
of gravitating particles, Math. Ann. 330 (2004) 693--708.

\bibitem{BHN94B} P. Biler, D. Hilhorst, T. Nadzieja, Existence and nonexistence of solutions for a model gravitational of particles, Colloq. Math.
67 (1994) 297--308.

\bibitem{BK10} P. Biler, G. Karch, Blowup of solutions to generalized Keller--Segel model, J. Evol. Equ. 10 (2010) 247--262.

\bibitem{BKZ15} P. Biler, G. Karch, J. Zienkiewicz, Optimal criteria for blowup of radial and N-symmetric solutions of chemotaxis systems, Nonlinearity 28(12) (2015) 4369--4387.

\bibitem{BW09} P. Biler, G. Wu,
Two-dimensional chemotaxis models with fractional diffusion, Math.
Methods Appl. Sci. 32 (2009) 112--126.

\bibitem{BC10} N. Bournaveas, V. Calvez, The one-dimensional Keller--Segel model with fractional diffusion of cells,  Nonlinearity 23(4) (2010)  923--935.
		
\bibitem{C22} D. Chamorro, Mixed Sobolev-like inequalities in Lebesgue spaces of variable exponents and in Orlicz spaces, Positivity 26 (2022)  p. 5.

\bibitem{CV24} D. Chamorro, G. Vergara-Hermosilla, Lebesgue spaces with variable exponent: some applications to the Navier--Stokes equations, Positivity 28(2) (2024)  p. 24.

\bibitem{CLW19} H. Chen, W. Lv, S. Wu, Existence for a class of chemotaxis model with fractional diffusion in Besov spaces (in Chinese), Sci. Sin. Math.  49(12) (2019) 1--17.

\bibitem{CF13} D.V. Cruz-Uribe, A. Fiorenza, \textit{Variable Lebesgue Spaces: Foundations and Harmonic Analysis}, Birkh\"{a}user/Springer, Heidelberg, 2013.

\bibitem{DL17} C. Deng, C. Liu,  Largest well-posed spaces for the general diffusion
system with nonlocal interactions, J. Funct. Anal. 272 (2017) 4030--4062.

\bibitem{DHHR11}  L. Diening, P. Harjulehto, P. H\"{a}st\"{o}, M. Ruzicka, \textit{Lebesgue and Sobolev Spaces with
Variable Exponents}, Lecture Notes in Mathematics 2017, Springer, Heidelberg, 2011.

\bibitem{E06} C. Escudero, The fractional Keller--Segel model,
Nonlinearity 19 (2006) 2909--2918.

\bibitem{H03} D. Horstmann, From 1970 until present: the Keller--Segel model in chemotaxis and its consequences I, Jahresber. DMV 105 (2003) 103--165.

\bibitem{H04} D. Horstmann, From 1970 until present: the Keller--Segel model in chemotaxis and its consequences II, Jahresber. DMV 106 (2004) 51--69.

\bibitem{I11} T. Iwabuchi, Global well-posedness for Keller--Segel system in Besov type spaces, J. Math. Anal. Appl. 379 (2011) 930--948.

\bibitem{KS70} E.F. Keller, L.A. Segel, Initiation of slime mold aggregation viewed as an instability, J. Theoret. Biol. 26 (1970) 399--415.

\bibitem{L02} P.-G. Lemari\'{e}-Rieusset, \textit{Recent Developments in the Navier--Stokes Problem}, Research
Notes in Mathematics, Chapman \& Hall/CRC, 2002.

\bibitem{L13} P.G. Lemari\'{e}-Rieusset, Small data in an optimal Banach space for the parabolic-parabolic and parabolic-elliptic Keller--Segel equations in the whole space, Adv. Differential Equations 18 (2013) 1189--1208.

\bibitem{L18} P. G. Lemari\'{e}-Rieusset, \textit{The Navier--Stokes Problem in the 21st Century}, A Chapman \& Hall Book, CRC Press, 2016.

\bibitem{LW23}  Y. Li, W. Wang, Finite-time blow-up and boundedness in a 2D Keller--Segel system with rotation, Nonlinearity
36(1) (2023) 287--318.

\bibitem{LYZ23}  J. Li, Y. Yu, W. Zhu, Ill-posedness issue on a multidimensional chemotaxis equations in the critical Besov spaces,  J. Geom. Anal. 33:84 (2023) 1--22.

\bibitem{MYZ08} C. Miao, B. Yuan, B. Zhang, Well-posedness of the Cauchy problem for the fractional
power dissipative equations, Nonlinear Anal. 68 (2008) 461--484.

\bibitem{NY20}  Y. Nie, J. Yuan, Well-posedness and ill-posedness of a multidimensional chemotaxis system in the
critical Besov spaces, Nonlinear Anal. 196 (2020) 111782.

 \bibitem{NY22} Y. Nie, J. Yuan, Ill-posedness issue for a multidimensional hyperbolic-parabolic model of chemotaxis
in critical Besov spaces $\dot{B}^{\frac{3}{2}}_{2d,1}\times (\dot{B}^{-\frac{1}{2}}_{2d,1})^{d}$, J. Math. Anal. Appl. 505 (2) (2022) 125539.

\bibitem{OY01}  K. Osaki, A. Yagi, Finite dimensional attractors for one-dimensional Keller--Segel equations, Funkcialaj Ekvac. 44 (2001) 441--469.

\bibitem{SYK15} Y. Sugiyama, M. Yamamoto, K. Kato, Local and global solvability and blow up for the drift-diffusion equation with the fractional dissipation inthe critical space, J. Differential Equations 258 (2015) 2983--3010.

\bibitem{V241} G. Vergara-Hermosilla, Remarks on variable Lebesgue spaces and fractional Navier--Stokes equations, arXiv:2402.07508v1.

\bibitem{V242} G. Vergara-Hermosilla, On variable Lebesgue spaces and generalized nonlinear heat equations, arXiv:2404.09588v1.

\bibitem{WZ11} G. Wu, X. Zheng, On the well-posedness for Keller--Segel system with fractional diffusion,
Math. Methods Appl. Sci. 34(14) (2011) 1739--1750.

\bibitem{XF22}W. Xiao, X. Fei, Ill-posedness of a multidimensional chemotaxis system in the critical Besov spaces,
J. Math. Anal. Appl. 514 (2022)  126302.

\bibitem{YKS14} M. Yamamoto, K. Kato, Y. Sugiyama,  Existence and analyticity of solutions to
the drift-diffusion equation with critical dissipation. Hiroshima Math. J. 44  (2014) 275--313.

\bibitem{Z10} Z. Zhai, Global well-posedness for nonlocal fractional
Keller--Segel systems in critical Besov spaces, Nonlinear Anal. 72
(2010) 3173--3189.

\bibitem{Z18}  J. Zhao, Well-posedness and Gevrey analyticity of the generalized
Keller--Segel system in critical Besov spaces, Annali di Matematica 197 (2018)  521--548.

\bibitem{Z21} J. Zhao, Global existence of large solutions for the generalized Poisson--Nernst--Planck equations,  J. Math. Anal. Appl.  498 (2021)  124943.

%\bibitem{ZL14} J. Zhao, Q. Liu, On the Cauchy problem for the
%fractional drift-diffusion system in critical Besov spaces,  Applicable Analysis  93(7) (2014) 1431--1450.

\end{thebibliography}
\end{document}